\def\Ddots{\mathinner{\mkern1mu\raise\p@
\vbox{\kern7\p@\hbox{.}}\mkern2mu
\raise4\p@\hbox{.}\mkern2mu\raise7\p@\hbox{.}\mkern1mu}}
\theoremstyle{plain}
\newtheorem{theo}{Theorem}[section]
\newtheorem{prop}[theo]{Proposition}
\newtheorem{cor}[theo]{Corollary}
\newtheorem{lemma}[theo]{Lemma}
\theoremstyle{definition}
\newtheorem{defn}[theo]{Definition}
\theoremstyle{remark}
\newtheorem*{remark}{Remark}
\newcommand{\Wlus}{\mathcal{W}_{\mathrm{Lus}}}
\newcommand{\Xlus}{\check{X}_{\mathrm{Lus}}}
\newcommand{\Xlie}{\check{X}_{\mathrm{Lie}}}
\newcommand{\Xcan}{\check{X}_{\mathrm{can}}}
\newcommand{\Xgiv}{\check{X}_{\mathrm{Giv}}}
\newcommand{\Wlie}{\mathcal{W}_{\mathrm{Lie}}}
\newcommand{\Wlieq}{\mathcal{W}_{\mathrm{Lie},q}}
\newcommand{\Wcan}{\mathcal{W}_{\mathrm{can}}}
\newcommand{\Wcanq}{\mathcal{W}_{\mathrm{can},q}}
\newcommand{\Wgiv}{\mathcal{W}_{\mathrm{Giv}}}
\newcommand{\x}{\times}
\newcommand{\Q}{Q}
\newcommand{\p}{{p}}
\newcommand{\inv}{^{-1}}
\newcommand{\Sp}{\mathrm{Sp}}
\renewcommand{\P}{\mathbb{P}}
\newcommand{\C}{\mathbb{C}}
\newcommand{\Z}{\mathbb{Z}}
\newcommand{\HH}{\mathcal{H}}
\newcommand{\qH}{\operatorname{qH}}
\newcommand{\Gr}{\operatorname{Gr}}
\newcommand{\Proj}{\operatorname{Proj}}
 \newcommand{\XGS}{\check{X}_{\mathrm{GS}}}
\newcommand{\WGS}{\mathcal{W}_{\mathrm{GS}}}
\newcommand{\Xprz}{\check{X}_{\mathrm{Prz}}}
\newcommand{\Wlusq}{\mathcal{W}_{\mathrm{Lus},q}}
\newcommand{\Wprz}{\mathcal{W}_{\mathrm{Prz}}}
\newcommand{\Wprzq}{\mathcal{W}_{\mathrm{Prz},q}}
\newcommand{\Wtheta}{\mathcal W_\theta}
\newcommand{\Ftheta}{\mathcal F_\theta}
\renewcommand{\Wlusq}{\mathcal{W}_{\mathrm{Lus},q}}
\newcommand{\pilie}{\pi_{\mathrm{Lie}}}
\newcommand{\X}{\check{\mathbb X}_{\operatorname{can}}}
\newcommand{\Xcheckcan}{\check{\mathbb X}_{\operatorname{can}}}
\newcommand{\XcheckLie}{\check{\mathbb X}_{\operatorname{Lie}}}
\newcommand{\denom}{\delta}
\newcommand{\SO}{\operatorname{SO}}
\dedicatory{This paper is dedicated to George Lusztig on his 70$^{\sl{th}}$ birthday.}
\subjclass[2000]{14N35, 14M17, 14J33, 57T15}
\keywords{Mirror Symmetry, quadrics, Lie theory, Gromov-Witten theory, quantum cohomology, Landau-Ginzburg model, Gauss-Manin system}
\title{A comparison of Landau-Ginzburg models for odd dimensional quadrics}
\author{Clelia Pech}
\address{University of Kent, 
Canterbury,  
CT2 7NZ,}
\author{Konstanze Rietsch}
\address{King's College London, Strand, London, WC2R 2LS}
\thanks{This work is supported by the Leverhulme Trust grant  n\textsuperscript{o} F07040AW - A Lie theoretic approach to derived categories of flag varieties $G/P$}
\begin{document}

\begin{abstract}
	In \cite{rietsch}, the second author defined a Landau-Ginzburg model for homogeneous spaces $G/P$. In this paper, we reformulate this LG model in the case of the odd-dimensional quadric $X=\Q_{2m-1}$. Namely we introduce a regular function $\Wcan$ on a variety $\Xcan\x\C^*$, where $\Xcan$ is the complement of a particular anticanonical divisor in the projective space $\C\P^{2m-1}=\P(H^*(X,\C)^*)$. Firstly we prove that the Jacobi ring associated to $\Wcan$ is isomorphic to the quantum cohomology ring of the quadric, and that this isomorphism is compatible with the identification of homogeneous coordinates on  
$\Xcan\subset \C\P^{2m-1}$ with elements of $H^*(X,\C)$. Secondly we find a very natural Laurent polynomial formula for $\Wcan$ by restricting it to a `Lusztig torus' in $\Xcan$. Thirdly we show that the Dubrovin connection on $H^*(X,\C[q])$ embeds into the Gauss-Manin system associated to $\Wcan$ and deduce a flat section formula in terms of oscillating integrals. Finally, we compare $(\Xcan,\Wcan)$ with previous Landau-Ginzburg models defined for odd quadrics. Namely, we prove that it is a partial compactification of Givental's original LG model \cite{Givental:EqGW}. We show that our LG model is isomorphic to the Lie-theoretic LG model from \cite{rietsch}. Moreover it is  birationally equivalent to an LG model introduced by Gorbounov and Smirnov \cite{GS}, and it is algebraically isomorphic to Gorbounov and Smirnov's mirror for $Q_3$, implying a tameness property in that case.   
\end{abstract}

\maketitle

\section{Introduction}

The geometric Satake correspondence  \cite{lusztig,Gin:GS,MV} constructs representations of a reductive algebraic group $G$ in terms of geometry of the affine Grassmannian of the Langlands dual group $G^\vee$. It has its origins in the seminal paper of Lusztig \cite{lusztig}. In this paper we describe the mirror symmetry partner of a smooth, odd-dimensional complex quadric $X=\Q_{2m-1}$ from the point of view of its automorphism group $G^\vee=\SO_{2m+1}(\C)$, Langlands duality and the geometric Satake correspondence. 

Recall that the Langlands dual group of $\SO_{2m+1}(\C)$ is the symplectic group $\Sp_{2m}(\C)$. The geometric Satake correspondence provides us with a `Langlands dual' interpretation of the cohomology $H^*(X,\C)$ of the smooth quadric $X=Q_{2m-1}$ as follows. The quadric $X$ appears as one of the simplest Schubert varieties inside the affine Grassmannian of $\SO_{2m+1}(\C)$,
\[
X\hookrightarrow \Gr_{G^\vee}=\SO_{2m+1}(\C((t)))/\SO_{2m+1}(\C[[t]]).
\] 
Namely this Schubert variety is associated to the first fundamental coweight of $\SO_{2m+1}(\C)$.  The geometric Satake correspondence reinterprets this coweight as a dominant {\it weight} for the Langlands dual group, $\Sp_{2m}(\C)$. Moreover the intersection cohomology of the associated Schubert variety $X$ is then understood to be the representation of $\Sp_{2m}(\C)$ with that highest weight. In our setting, since the quadric is smooth, the intersection cohomology coincides with the usual  cohomology of $X$ and we obtain the interpretation,
\[
H^*(X,\C)=\C^{2m}=V_{\omega_1},
\]
of the cohomology of $X$, where $V_{\omega_1}$ is the defining representation of $G=\Sp_{2m}(\C)$.

In mirror symmetry a `mirror dual' construction of the {\it quantum} cohomology ring of $X$ is sought, along with other structures involving Gromov-Witten invariants of $X$, see \cite{BeauvilleQCoh, CoxKatz}. In the setting of the quadric $X$, the (small) quantum cohomology ring is a $1$-parameter deformation of $H^*(X,\C)$, whose structure constants are Gromov-Witten invariants that count $3$-pointed genus $0$ holomorphic curves in $X$ subject to certain constraints. The result is a commutative algebra structure on $H^*(X,\C)\otimes \C[q]$ which recovers the usual cohomology ring when $q\to 0$. For explicit formulas in the case of quadrics we refer to \cite{CMP:II}.  

There are various previous mirror constructions that apply to odd quadrics, which we recall in Section~\ref{s:LGoverview}. Some of these already recover the quantum cohomology ring, and one construction is already in terms of the Langlands dual group $\Sp_{2n}(\C)$. The main new construction we introduce in this paper combines the geometric Satake correspondence  with the `Langlands dual group' construction of the mirror. As a result we construct a mirror for the quadric $X$ that is expressed in terms of coordinates which are naturally identified with cohomology classes of the quadric. An analogous construction was carried out for Grassmannians in \cite{MR}, and then for Lagrangian Grassmannians in~\cite{PeRi}.    

  The mirror of the quadric $X$ takes the form of a \emph{Landau-Ginzburg model} or \emph{LG model}, that is, of a pair $(\Xcan,\Wcan)$, where $\Xcan$ is an affine Calabi-Yau variety and $\Wcan$ is a regular function $\Xcan\to \C$.  In our construction $\Xcan$ is the complement  of a particular anticanonical divisor in projective space,
  \[
\Xcheckcan:= \C\P^{2m-1}=\P(H^*(X,\C)^*)=\Proj\left(\C[\p_0,\dotsc,\p_{2m-1}]\right).
  \] 
  Here $\C\P^{2m-1}$ is viewed as the homogeneous space $\P(V_{\omega_1}^*)$ for the symplectic group $G=\Sp_{2m}(\C)$. Thus the first equality is by the geometric Satake correspondence. In the second equality, the variables or homogeneous coordinates,  $\p_0,\p_1,\dotsc, \p_{2m-1}$ are identified with the Schubert basis,  $\sigma_0,\sigma_1,\dotsc, \sigma_{2m-1}\in H^*(X,\C)$, which has one element  in each even degree. 
  
To give a concrete example, in the case of $X=\Q_3$ our formula reads
\begin{equation*}
\Wcanq = \frac{\p_1}{\p_0}+\frac{\p_2^2}{\p_1 \p_2-\p_0\p_3}+ q \frac{\p_1}{\p_3}
\end{equation*}
in terms of the homogeneous coordinates $(\p_0:\p_1:\p_2:\p_3)$ on $\Xcheckcan=\C\P^3$, which are identified with the Schubert classes of $\Q_3$. 

In the case of $X=Q_5$ our formula reads
\begin{equation*}
\Wcanq = \frac{\p_1}{\p_0}+\frac{\p_2p_4}{\p_1 \p_4-\p_0\p_5}+ \frac{\p_3^2}{\p_2 \p_3-\p_1\p_4+\p_0 \p_5} + q \frac{\p_1}{\p_5}
\end{equation*}
in terms of the homogeneous coordinates $(\p_0:\p_1:\p_2:\p_3:\p_4:\p_5)$ on $\Xcheckcan=\C\P^5$, which are identified with the Schubert classes of $\Q_5$.

We compare our formula with previous constructions of Landau-Ginzburg models \cite{Givental:EqGW,rietsch,GS}, and obtain various mirror theorems for our LG model.

\subsection{Quantum Cohomology} \label{s:QCoh1}
The LG model $(\Xcan,\Wcan)$ provides the following Jacobi ring description of the small quantum cohomology ring of the quadric~$X$. Let $\denom_{i}$ denote the $i^{\rm th}$ quadratic denominator of the superpotential $\Wcan$ when $i=1\dotsc m-1$, and $\denom_m=\p_{2m-1}$. Then   
\begin{multline}\label{e:qcoh}
\qH^*(X,\C)[q\inv]=\C[\Xcan\x \C^*_q]/\left(\frac{\partial \Wcanq}{\partial \p_1}, \frac{\partial \Wcanq}{\partial \p_2}, \dotsc, \frac{\partial \Wcanq}{\partial \p_{2m-1}}\right)\\
=\C[\p_1,\dotsc, \p_{2m-1},\denom_1\inv, \dotsc, \denom_{m}\inv,q^{\pm 1}]/\left(\frac{\partial \Wcanq}{\partial \p_1}, \frac{\partial \Wcanq}{\partial \p_2}, \dotsc, \frac{\partial \Wcanq}{\partial \p_{2m-1}}\right),
\end{multline}
where $p_i$ is identified with the (unique) Schubert class generator $\sigma_i$ in $\qH^{2i}(X,\C)$ and we have set $\p_0=1$. In particular, the equality \eqref{e:qcoh} says that the $p_i$ span the right hand side as a free $\C[q,q\inv]$-module, and the multiplicative structure constants compute $3$-point genus~$0$ Gromov-Witten invariants of the quadric $X$.

\subsection{The Dubrovin connection and flat sections}
The next more sophisticated mirror theorem says that the Gauss-Manin connection defined using $\Wcanq$ recovers Dubrovin's connection  on the free $\C[q^{\pm 1}]$-module, $H^*(X,\C[q^{\pm 1}])$, which is defined using the small quantum cup product $\star_q$, see \cite{Dub:2DTFT,CoxKatz}. The precise statement is formulated in Section~\ref{s:consequences}.  Namely we have a natural embedding of the Dubrovin connection into the Gauss-Manin system, Theorem~\ref{t:connections}, in which the coordinates $\p_i$ on the Gauss-Manin side match up with the Schubert classes $\sigma_i$ on the Dubrovin connection side. The theorem in particular implies an integral formula for a global flat section of the connection, which is stated in Corollary~\ref{c:int}. Much earlier Givental \cite{Givental:EqGW} constructed a flat section for this connection without using mirror symmetry, as a power series with coefficients given by descendent $2$-point Gromov-Witten invariants of~$X$. Our Corollary~\ref{c:int} implies integral formulas for these invariants, via a comparison with Givental's formula. See the sequel paper~\cite{PRW:quadrics}.    

To illustrate the flat section formulas, let $X=Q_3$ and $\omega$ be a meromorphic $3$-form on $\C\P^3$ with simple poles along the divisor 
\[D=\{p_0=0\}\cup\{p_3=0\}\cup\{\p_1 \p_2-\p_0\p_3=0\}.\]
Suppose $\Gamma$ is a real $3$-dimensional cycle in $\C\P^3\setminus D$ for integrating over; e.g. the compact $3$-torus used in Section~\ref{s:IntroLus} below.  
Then our result implies that the $H^*(X,\C)$-valued function in $q$,
\[
  S(q) =   
  \left(\int_\Gamma e^{{\Wcanq}} p_3\,\omega \right)
 \ \sigma_{0} 
 +
  \left(\int_\Gamma e^{{\Wcanq}} p_2\,\omega \right) 
 \ \sigma_{1}+  
 \left(\int_\Gamma e^{{\Wcanq}} p_1\,\omega \right)
 \ \sigma_{2} +
  \left(\int_\Gamma e^{{\Wcanq}} \,\omega\right) 
  \ \sigma_{3},
\] 
satisfies the `flat section' differential equation
\[
 q\frac{d}{d q} S = \sigma_1\star_q  S.
\]
Here we have set $\p_0=1$ and used $\p_1,p_2, \p_3$ as coordinates on $\C\P^3\setminus D$, as in Section~\ref{s:QCoh1}.

We can improve on Theorem~\ref{t:connections} in the special case of $X=Q_3$ using a paper of Gorbounov and Smirnov. Namely in \cite{GS} Gorbounov and Smirnov construct their own ad hoc partial compactification of the original Givental-mirror \cite{Givental:EqGW} for $Q_{2m-1}$. They prove with Sabbah and Nemethi that their superpotential $\WGS$ is cohomogically tame, which implies that the associated Gauss-Manin system reconstructs the Dubrovin connection without needing to pass to a submodule, see \cite{sabbah}. We compare the  Gorbounov-Smirnov partial compactification of Givental's mirror to the canonical LG model 
and show that in the case of $Q_3$ they are isomorphic. 
Together with the result of \cite{GS}, we obtain in this case an isomorphism of the Gauss-Manin system of $(\Xcan,\Wcan)$ with the Dubrovin connection, see Theorem~\ref{t:connections}. 

For dimension greater than three the canonical LG model is only birationally isomorphic to the Gorbounov-Smirnov mirror. However it still has the expected number of critical points. We therefore conjecture that the canonical and the Lie-theoretic superpotentials are also cohomologically tame, and the isomorphism statement of Theorem~\ref{t:connections} extends to all quadrics.

\subsection{The Lie-theoretic superpotential and Lusztig coordinates}\label{s:IntroLus}
The flat sections $S(q)$ discussed above can also be written in terms of $(\Xlie,\Wlie)$, the Lie-theoretic LG model which was defined in \cite{rietsch}.  In this case the top degree coefficient of $S(q)$ (with respect to the grading on cohomology) takes on the form
\begin{equation}\label{e:integralLie}
  \left<S(q),\sigma_0\right> = \int_{\Gamma_{\mathrm {Lie}}\subset \Xlie} e^{\Wlieq}\omega.
\end{equation} 
Here $\Xlie$ is a $(2m-1)$-dimensional affine subvariety of the full flag variety $Sp_{2m}/B$ (it is a Schubert variety intersected with an opposite big cell), and $\omega$ is a particular holomorphic volume form on $\Xlie$. This formula \eqref{e:integralLie} which we prove here was conjectured in \cite[Conjecture~8.1]{rietsch}. 

Crucial to our proofs is a new Laurent-polynomial LG-model. 
From the point of view of Lie theory, $\Xcan$ can be described as the image inside $\Xcheckcan=\C \P^{2m-1}$ of the intersection of two opposite Bruhat cells. It is therefore natural to restrict $(\Xcan,\Wcan)$ to a `Lusztig torus' $\Xlus$ inside $\Xcan$. That is we consider the same torus which would be used by Lusztig to parametrise the totally positive part of $\Xcheckcan$ viewed as $\Sp_{2m}$-homogeneous space, in the theory of total positivity~\cite{lusztig:TP}. After restriction of $\Wcan$ to this torus  we obtain a very nice Laurent polynomial, which is reminiscent of the standard superpotential for projective space $\C\P^n$,
\[z_1+\dotsc + z_n + q \frac 1{z_1\dotsc z_n},\] 
and which can be used to compute the integral \eqref{e:integralLie}. 

For example in the case of $X=Q_3$ we have a $3$-dimensional torus with coordinates $a,b,c$, and
\begin{equation}\label{e:LusCoordQ3}
\Wlusq=a+b+c +q \frac{a+b}{abc}.
\end{equation}
In the case of $X=Q_5$ we have a $5$-dimensional torus with coordinates $a_1,a_2,b_1,b_2,c$, and
\begin{equation}\label{e:LusCoordQ5}
\Wlusq=a_1+a_2+b_1+b_2+c +q \frac{a_1+b_1}{a_1a_2b_1b_2c}.
\end{equation}
We may rewrite our integral formulas in terms of these coordinates. For example if $X=Q_3$ we obtain the easily computable integral
\begin{equation*}
  \left<S(q),\sigma_0\right> =\oint \oint \oint e^{\left(a+b+c +q \frac{a+b}{abc}\right)}\frac{da}a\wedge\frac{db}b\wedge\frac{dc}c.
\end{equation*}

Finally, we note that analogous results in the parallel case of even quadrics are worked out  in~\cite{PRW:quadrics}. For even-dimensional quadrics, the Langlands dual homogeneous space is another even-dimensional quadric, thus the canonical mirror $(\Xcan,\Wcan)$ looks quite different from the one in the odd quadrics case. However when we restrict to the Lusztig torus in that setting, the formula is a straightforward generalisation of \eqref{e:LusCoordQ3}, \eqref{e:LusCoordQ5}, etc., to an even number of coordinates. 

\vskip .3cm

\noindent{\sl{Acknowledgements~:}}
The second author thanks George Lusztig for his great  PhD supervision and for introducing her to the theory of total positivity, which turns out to have so many beautiful connections. The second author also thanks Dale Peterson for his inspiring lectures on quantum cohomology.

\section{Overview of earlier LG models} \label{s:LGoverview}

We begin by recalling various earlier constructions of mirror Landau-Ginzburg models which are relevant in our setting.

\vskip .2cm

\paragraph{\bf The Givental mirror.} The earliest  Landau-Ginzburg model construction which applies to odd quadrics is due to Givental~\cite{Givental:EqGW}, who wrote down an LG  model  for any complex projective hypersurface $Y \hookrightarrow \C\P^N$. Givental's LG model is a regular function $\Wgiv$ on a hypersurface 
inside an $N$-dimensional torus.

The odd quadric $Q_{2m-1}$ is a hypersurface inside $\C\P^{2m}$, and Givental's mirror is a regular function on a particular hypersurface in a $2m$-dimensional torus, namely 
\[
 \{ (x_1,...,x_{2m}) \mid x_{2m}+ \frac{q}{\prod_{i=1}^{2m} x_i} =1\}.
\]
In a more symmetric formulation the Givental mirror of $\Q_{2m-1}$ is 
\begin{align*}
	\Xgiv &= \left\{ (\nu_1,\dots,\nu_{2m+1}) \in (\C^*)^{2m+1} \mid \prod_{i=1}^{2m+1} \nu_i = q, \nu_{2m}+\nu_{2m+1} = 1 \right\},\\
	 \Wgiv &= \nu_1 + \dots + \nu_{2m-1}.
\end{align*}
Additionally, $\Xgiv$ comes with a holomorphic volume form. But we do not include it here as it will not be used later. 

\vskip .2cm

\paragraph{\bf The Przyjalkowski mirror}
We use the notation $(\Xprz,\Wprz)$ for a Laurent polynomial mirror written down in \cite{przyjalkowski} which extends Givental's mirror from $\Xgiv$ to a $(2m-1)$-dimensional torus containing it. In the case where $Y$ is the smooth quadric $\Q_3$ in $\mathbb P^4$ the Przyjalkowski mirror is given by
\[
	\Xprz := (\C^*)^3,\quad \Wprzq := Y_1+Y_2+\frac{(Y_3+q)^2}{Y_1 Y_2 Y_3}. 
\]
More generally for a quadric $\Q_{2m-1}$ the formula reads
\[
	\Xprz = (\C^*)^{2m-1},\quad \Wprzq = Y_1+\dots+Y_{2m-2}+\frac{(Y_{2m-1}+q)^2}{Y_1\dots Y_{2m-1}}.
\]
$\Wprz$  is obtained from $\Wgiv$ via the change of coordinates described in Section~\ref{s:Comparison}.

\vskip .4cm

One issue with both  $(\Xgiv,\Wgiv)$ and $(\Xprz,\Wprz)$ is that the superpotential df
does not in general have the expected number of critical points (at fixed generic value of $q$). Namely the expected number of critical points should be equal to $\dim H^*(\Q_{2m-1},\C)=2m$. The analogous problem in case of the even quadric $Q_4$ was already observed in \cite{EHX}. In this example \cite{EHX} constructed a partial compactification to solve this problem, albeit in an ad hoc fashion. 

The following LG models, described in detail later in the paper, are partial compactifications of Givental's mirror~$(\Xgiv,\Wgiv)$ which are known to have the correct number of critical points. 

\vskip .2cm
\paragraph{\bf The Lie-theoretic mirror.} The quadratic hypersurfaces $\Q_{2m-1}$ have a large symmetry group. Indeed $\Q_{2m-1}$ can be viewed as the Grassmannian of isotropic lines in $\C^{2m}$ for a fixed non-degenerate quadratic form. In this way the quadric $Q_{2m-1}$ is identified as a cominuscule homogeneous space for the group $G^\vee=\SO_{2m+1}$.

For any projective homogeneous space of a complex algebraic group there is a Landau-Ginzburg model which was defined by the second author using a Lie-theoretic construction. Namely, in \cite{rietsch} a conjectural LG~model $(\Xlie,\Wlie)$ is constructed for any projective homogeneous space $X=G^\vee/P^\vee$ of a simple complex algebraic group $G^\vee$, as a regular function on an affine subvariety of the Langlands dual group $G$. This affine variety is generally larger than a torus. It is shown in \cite{rietsch} that  this Lie-theoretic LG model has the correct number of critical points. Namely its Jacobi ring is shown to recover the Peterson variety presentation \cite{peterson} of the quantum cohomology of $X$.

\vskip .2cm

\paragraph{\bf The Gorbounov-Smirnov mirror.} For odd-dimensional quadrics $\Q_{2m-1}$ a recent paper \cite{GS} of Gorbounov and Smirnov directly constructs a partial compactification $(\XGS,\WGS)$ of the Givental mirror, without making use of \cite{rietsch}. Moreover a version of mirror symmetry is proved, which identifies the initial data of the Frobenius manifold associated to their LG~model with that constructed out of the quantum cohomology. In particular the Gauss-Manin connection associated to $(\XGS,\WGS)$  is shown to be isomorphic to the small Dubrovin connection.

\section{Plan of the paper}\label{s:plan}

We begin in Section~\ref{s:Lie} by setting up notation and giving a careful definition of the Lie-theoretic superpotential $(\Xlie, \Wlie)$. The domain is an open subvariety of a $(2m-1)$-dimensional Schubert variety $\XcheckLie$ of the full flag variety of $\Sp_{2m}(\C)$.

\vskip .2cm
Our first result is that $\Wlie$ restricted to a certain torus recovers the Laurent polynomial superpotential $\Wlus$. This is proved in Section~\ref{s:Laurent}.

\vskip .2cm
We introduce the canonical LG model in Section~\ref{s:canIntro}. It has domain $\Xcan$ equal to the complement of an anti-canonical divisor in $\Xcheckcan=\C \P^{2m}$, where $\C \P^{2m}$ viewed as right homogeneous space for $\Sp_{2m}(\C)$. We then describe a birational map (depending on $q$) 
\begin{equation}\label{e:biratiso}
\XcheckLie - - - > \Xcheckcan.
\end{equation}
This birational map sends the torus used above isomorphically to the Lusztig torus $\Xlus$ in $\Xcheckcan$. We express the Lusztig coordinates in terms of the homogeneous coordinates of $\Xcheckcan$ and show that the formula for $\Wlus$ transforms to the formula for the canonical superpotential $\Wcan$. Therefore we see that $(\Xlie, \Wlie)$ and $(\Xcan, \Wcan)$ are birationally isomorphic (to each other as well as to $(\Xlus, \Wlus)$).

\vskip .2cm
Next, in Section~\ref{s:mirror} we show that the birational map \eqref{e:biratiso} restricts to an isomorphism
\[
\Xlie \to \Xcan.
\]
It follows that the canonical and the Lie-theoretic LG~models are isomorphic.

\vskip .2cm

Then in Section~\ref{s:Jacobi} we deduce an isomorphism which identifies the Jacobi ring associated to $(\Xcan,\Wcan)$ with the quantum cohomology ring of $X$. We show that under this isomorphism the homogeneous coordinates $\{\p_i\}$ map to the Schubert classes $\{\sigma_i\}$, and the quadratic denominators of $\Wcan$ each map to either~$q$ or to $\sigma_{2m-1}$.

\vskip .2cm
In Section~\ref{s:Comparison} we show that the following  LG models are all birationally equivalent:
\begin{itemize}
\item the Givental mirror $(\Xgiv,\Wgiv)$ from \cite{Givental:EqGW},
\item the Przyjalkowski mirror $(\Xprz,\Wprz)$ from \cite{przyjalkowski},
\item the Gorbounov-Smirnov mirror $(\XGS,\WGS)$ from \cite{GS},
\item and the canonical mirror $(\Xcan,\Wcan)$, or equivalently $(\Xlie, \Wlie)$ from \cite{rietsch}.
\end{itemize} 
In the case of $X=Q_3$ we also show that the Gorbounov-Smirnov mirror and the canonical mirror are isomorphic. 

\vskip .2cm
Section~\ref{s:consequences} is devoted to the Gauss-Manin system of $(\Xcan,\Wcan)$. Inside this Gauss-Manin system we identify a free $\C[q^{\pm 1}]$-submodule with connection, which is then shown to be isomorphic to the Dubrovin connection on $H^*(X,\C[q^{\pm 1}])$. From this result we deduce integral formulas for flat sections of the Dubrovin connection.
  
\vskip .2cm

Using results from \cite{GS} we deduce that the Dubrovin connection on $H^*(X,\C[q^{\pm 1}])$ is isomorphic to the Gauss-Manin system of $(\Xcan,\Wcan)$  (and not just to a submodule) in the special case of the quadric $Q_3$. In the final section we collect together and write out explicitly the formulas in the example of $Q_3$.
  
\section{The Lie-theoretic LG model  $(\Xlie,\Wlie)$}\label{s:Lie}

To introduce the  Lie-theoretic Landau-Ginzburg model 
we view the odd-dimensional quadric $X=\Q_{2m-1}$ for $m \geq 2$ as a homogeneous space under the special orthogonal group $G^\vee=\SO_{2m+1}(\C)$.
We fix a Borel subgroup $B^\vee_+$, a maximal torus $T^\vee$ and an opposite Borel subgroup $B^\vee_-$, and consider the Dynkin diagram of type~$B_m$:
\begin{center}
	\begin{tikzpicture}          
		\node[draw,fill=black,circle,minimum size=4pt,inner sep=0pt,label distance=2pt,label=below:{1}] (1) at (0,0) {};
		\node[draw,circle,fill=white,,minimum size=4pt,
                            inner sep=0pt,label distance=2pt,label=below:{2}] (2) at (1,0) {};
		\node[draw,circle,fill=white,,minimum size=4pt,
                            inner sep=0pt,label distance=2pt,label=below:{3}] (3) at (2,0) {};
		\node[draw,circle,fill=white,,minimum size=4pt,
                            inner sep=0pt,label distance=2pt,label=below:{$m-1$}] (4) at (3,0) {};
		\node[draw,circle,fill=white,minimum size=4pt,
                            inner sep=0pt,label distance=2pt,label=below:{$m$}] (m) at (4,0) {};
	
		\draw
			(1) -- (2)
			(2) -- (3);
	
		\draw[double distance=2pt]
			(4) -- (m);
	
		\draw[->,double distance=2pt,very thin]
			(4) -- (3.65,0);
	
		\draw[loosely dotted]
			(3) -- (4);
	\end{tikzpicture}
\end{center}
 We denote by  $P^\vee_{\omega_i} \supset B_+^\vee$   the  parabolic subgroup  corresponding to the $i$-th vertex of the diagram. The quadric $X=\Q_{2m-1}$ identifies with the homogeneous space $\SO_{2m+1}(\C)/P_{\omega_1}^\vee$.

The Landau-Ginzburg model for $X=\SO_{2m+1}(\C)/P^\vee_{\omega_1}$ defined in \cite{rietsch}, which we call the \emph{Lie-theoretic LG model} $(\Xlie,\Wlie)$, takes place on an affine subvariety $\Xlie$ of the Langlands dual flag variety. Let $G=\Sp_{2m}(\C)$ be the Langlands dual group of $G^\vee$, and $B_+$, $T$ and $B_-$ be the duals of $B^\vee_+$, $T^\vee$ and $B^\vee_-$, respectively. The Langlands dual flag variety is $\Sp_{2m}(\C)/B_-$, and the Lie-theoretic mirror $\Xlie$ is the intersection of two particular opposite open Bruhat cells in $\Sp_{2m}(\C)/B_-$. This intersection of cells is also called an \emph{open Richardson variety}. The Lie-theoretic potential $\Wlie$ will be a particular regular function on $\Xlie \times \C_q^*$, where by $\C^*_q$  we mean $\C^*$ with  coordinate denoted $q$.

\subsection{Notation for the symplectic group $G=\Sp_{2m}(\C)$.}\label{SpNotation} We denote by $\omega_i$ for $1 \leq i \leq m$ the $i$-fundamental weight of $\Sp_{2m}(\C)$, and by  $V_{\omega_i}$ the fundamental representation with highest weight $\omega_i$. We fix a basis $(v_1,\dots,v_{2m})$ for the representation $V:=V_{\omega_1} \cong \C^{2m}$ of $G$ with highest weight $\omega_1$ in such a way that the matrix of the symplectic form in the basis $(v_1,\dots,v_{2m})$ be given by
\[
 	J=\begin{pmatrix}
   		 &  &  &      &-1\\
   		 &   &  &1 & \\
  		 &&\Ddots & \\
   		 & -1 &  &  &\\
	    1& & &&
  	  \end{pmatrix}.
\] 
Then the Borel subgroups $B_+$ and $B_-$ consist of upper-triangular and lower-triangular matrices, and the maximal torus $T$ of diagonal matrices $(d_{ij})$ with non-zero entries $d_{i,i}=d_{2m-i+1,2m-i+1}\inv$. We also fix Chevalley generators $(e_i)_{1 \leq i \leq m}$ and $(f_i)_{1 \leq i \leq m}$ for the Lie algebra $\mathfrak{g}$ of $G$. Explicitly, we embed $\mathfrak{sp}(V,J)$ into $\mathfrak{gl}(V)$ and set
\[
	e_i := E_{i,i+1}+E_{2m-i,2m-i+1}  \ \text{ for $i=1,\dotsc, m-1$, \ \text{ and }\ } e_m := E_{m,m+1},
\] 
where $E_{i,j}=(\delta_{i,k}\delta_{l,j})_{k,l}$ is the standard basis of $\mathfrak{gl}(V)$. We also set $f_i:=e_i^T$, the transpose matrix, for every $i=1,\dotsc, m$. 

Using the Chevalley generators we introduce one-parameter subgroups of $G$ by setting $x_i(a):=\exp(a e_i)$ and $y_i(a):=\exp(a f_i)$. We choose specific representatives for elements of the Weyl group $W$ of $G$ by associating to a simple reflection $s_i$ the element
\[
 	\dot s_i=y_i(1)x_i(-1)y_i(1) \in G.
\]
If $s_{i_1}\cdots s_{i_r}$ is a reduced expression for $w \in W$ we denote by $\dot{w}$ the element of $G$ given by
\[
	\dot w=\dot s_{i_1}\cdots \dot s_{i_r},
\]
and we define $\ell(w) := r$, the length of the Weyl group element $w$. As is customary we also denote by $w_0$ be the longest element in $W$.

The $1$-parameter subgroups given by the $x_i$ generate $U_+$, and those given by the $y_i$ generate $U_-$. We define the following additive characters on $U_+$ and $U_-$, respectively,
\begin{equation*}
\begin{array}{cc}
e_i^*:U_+\to \C, &  \quad e_i^*(x_j(m))=m \delta_{i,j},\\
f_i^*:U_+\to \C, & \quad f_i^*(y_j(m))=m \delta_{i,j}.
\end{array}
\end{equation*}

 Recall that we realised our quadric $Q_{2m-1}$ as the homogeneous space $G^\vee/P^\vee_{\omega_1}$ for $SO_{2m+1}(\C)$. We now consider the \emph{dual parabolic subgroup} $P=P_{\omega_1}$ of $G=\Sp_{2m}(\C)$ associated with the first fundamental weight. Explicitly it is the subgroup whose Lie algebra is generated by all of the Chevalley generators $e_i$ together with $f_2,\dotsc, f_{m}$, leaving out $f_1$. We let $W_P$ denote the subgroup of the Weyl group $W$ associated with $P_{\omega_1}$, namely $W_P = \langle s_2,\dotsc, s_m \rangle$. We write $w_P$ for the longest element in $W_P$, $W^P$ for the set of minimal length coset representatives for $W/W_P$, and $w^P \in W^P$ for the minimal length coset representative of $w_0$.

\subsection{Definition of the Lie-theoretic LG model.}\label{s:LieNotation} 
In this section we follow \cite{rietsch}, adapting the results there to our special case. We first introduce the domain, $\Xlie \subset G / B_-$, of the Lie-theoretic mirror, namely 
\[ 
 	\Xlie := (B_+\dot w_P B_-  \cap B_-\dot w_0 B_- )/ B_-.
\]
It closure in $G/B_-$ is the Schubert variety
\[
\XcheckLie:=\overline{B_+\dot w_P B_-/B_- }.
\]
To write down the superpotential $\Wlie: \Xlie\x\C^*_q\to\C$, we introduce a variety $Z \subset G$ which is a covering of $\Xlie\x\C^*_q$. 
Let $T^{W_P}$ be the $W_P$-fixed part of the maximal torus $T$. Since $P$ is a maximal parabolic, this is a one-dimensional torus, and we have that 
\[
\begin{aligned}
\alpha_1:T^{W_P}&\to& \C^*_q,\\
t\ \ & \mapsto &\alpha_1(t)
\end{aligned}
\]
is a double cover. We set
\begin{equation}\label{e:Z}
 	Z:=B_-\dot w_0\cap U_+ T^{W_P}\dot w_P U_-,
\end{equation}
and define a map
\begin{equation*}
\begin{array}{cccc}
\pilie: &Z\qquad &\to &\Xlie\x\C^*_q\ \\
&z=u_1 t\dot w_p \bar u_2\ &\mapsto &\quad (zB_-,\alpha_1(t)).
\end{array}
\end{equation*}
which is again a double cover. Note that if we were to quotient out $Z$ by the action of the centre, $\{\pm \mathbf 1\}$, of $\Sp_{2m}(\C)$, then the map would be an isomorphism.  This would be the convention taken in \cite{rietsch}.

We define a regular function on $Z$ by
\begin{equation*}
\begin{array}{cccc}
\mathcal F:& Z\qquad &\to &\C\ \\
&z=u_1 t\dot w_p \bar u_2\ &\mapsto &  \sum e_i^*(u_1)+\sum f_i^*(\bar u_2). 
\end{array}
\end{equation*}
From \cite{rietsch} it follows that $\mathcal F$ is well-defined and descends to a regular function $\Wlie:\Xlie\x\C^*_q\to \C$ such that the diagram 
\begin{equation*}
\begin{array}{rccc}
&\quad Z&&\\
&\pilie\downarrow &\overset{\mathcal F}\searrow &\\
\Wlie:&\Xlie\x\C^*_q&\longrightarrow &\C\ 
\end{array}
\end{equation*}
commutes.
The corresponding map  for fixed $q$, is denoted
\[
 	\Wlieq : \Xlie \to \C,\qquad u_1\dot w_P B_- \mapsto \Wlie(u_1\dot w_P B_-,q).
\]

\section{The Laurent polynomial superpotential $\Wlus$.}\label{s:Laurent}
We continue using all of the notations from the previous section. We want to restrict the Lie-theoretic superpotential $\Wlie$ to a well-chosen torus to obtain a particularly nice Laurent polynomial. Instead of constructing the torus inside $\Xlie$, we will use $Z$, the double cover of $\Xlie\x\C^*$.  Recall that $Z\subset B_-\dot w_0$ consists of those elements $z$ which can be written in the form
\[
z=u_1 t\dot w_P\bar u_2
\]
for $u_1\in U_+, t\in T^{W_P}$ and $\bar u_2\in U_-$. However, the factors $u_1$ and $\bar u_2$ in this factorisation are not uniquely determined. We can make them uniquely determined for example by restricting the domain of $\bar u_2$, which is what we will do now. 
Let 
\begin{equation}\label{e:UPminus}
	U_-^P:=U_-\cap B_+(\dot w^P)\inv B_+. 
\end{equation}
We have the following proposition.
\begin{prop}\label{p:u2barz}
For any $z\in Z$ there exists a unique $\bar u_2=\bar u_2(z)\in U_-^P$ such that $z$ has a factorisation of the form
\[
z=u_1 t\dot w_P  \bar u_2.
\]
We  also write $t(z)=t$ if $z$ is factored as above.  The map $\theta:Z \to U_-^P\x  T^{W_P}$ defined by 
\[
\theta: z\mapsto (\bar u_2(z),t(z))\] 
is an isomorphism of affine varieties.  
\end{prop}

This proposition is proved using the twist map of Berenstein and Zelevinsky. 

\begin{theo}[{\cite[Theorem 1.2]{BeZel:TotPos}}]\label{t:BZ}
	Let $G$ be a semisimple algebraic group. Let $B_+,B_-$  be  opposite Borel subgroups in $G$ and $U_+,U_-$ their unipotent radicals. Denote by $\dot w \in G$ a choice of representative for an element $w$ of the Weyl group $W$ as in Section~\ref{SpNotation}. Consider $y \in U_- \cap B_+ \dot w^{-1} B_+$. There exists a unique $x \in U_+ \cap B_- \dot w B_-$ such that $U_+ \cap B_- \dot w y=\{x\}$, and the resulting map
	\[
		\tilde \eta_w : U_- \cap B_+ \dot w^{-1} B_+ \to U_+ \cap B_- \dot w B_-, y \mapsto x
	\]
	is an isomorphism. In particular there exists an inverse isomorphism
	\[
		\epsilon_w : U_+ \cap B_- \dot w B_- \to U_- \cap B_+ \dot w^{-1} B_+.
	\]
\end{theo}

\begin{proof}[Proof of Proposition~\ref{p:u2barz}]
The map $z\mapsto (t(z),\bar u_2(z))$ is constructed as the composition
\[
	Z \to (U_- \dot w_0 \cap B_+ \dot w_P U_-)\x T^{W_P} \to (U_+ \cap B_- \dot w^P B_-)\x  T^{W_P}\to  U_-^P\x T^{W_P},
\]
where the rightmost map is defined using the isomorphism $\epsilon_{w^P}$ from Theorem~\ref{t:BZ}, the middle map is defined using left multiplication by $\dot w_0^{-1}$, which is also an isomorphism, and the leftmost map is defined to be 
\[
z=b_- \dot w_0 \mapsto ([b_-]^{-1}_0 b_- \dot w_0, t(z)).
\] 
Here $[b_-]_0$ denotes the torus part of the Borel group element $b_-\in T U_-$. This latter map is also an isomorphism with inverse $(b_+ \dot w_P u_-,t) \to t[b_+]_0^{-1}b_+ \dot w_P u_-$.
\end{proof}

\subsection{The intermediate LG model} \label{s:Wtheta}
We now have the commutative diagram 
\begin{equation*}
\begin{array}{ccccccc}
\Xlie\x\C^*_q & \overset{\pilie}{\longleftarrow} &  Z &\overset{\theta}{
\underset{\cong}\longrightarrow}& U_-^P\x T^{W_P} & \overset{\operatorname{id} \x\alpha_1}\longrightarrow &  \quad U_-^P\x \C^*_q
 \\
\Wlie\downarrow\qquad & &\mathcal F \downarrow\qquad & &\mathcal F_\theta \downarrow \qquad & & \Wtheta\downarrow\qquad \\
\C &=& \C &=& \C &=& \C,
\end{array}
\end{equation*}
defining the maps $\Ftheta: U_-^P\x T^{W_P}\to \C$, and $\Wtheta: U_-^P\x \C^*_q\to \C$. Note that we may invert $\pilie$ and $\operatorname{id}\x \alpha_1$, if we quotient out by the action of $\{\pm \mathbf 1\}$ on $Z$ and $T^{W_P}$. Therefore we may think of 
$(U_-^P, \Wtheta)$ as being an isomorphic LG model to $(\Xlie,\Wlie)$; there is an isomorphism $U_-^P\x\C^*_q\to \Xlie\x\C^*_q$ which is the identity on the second factor, and under which  $\Wlie$ pulls back to $\Wtheta$. 

\subsection{The Laurent polynomial LG model}\label{s:Wlus}
We define an open dense torus inside $U_-^P$ as follows. The Weyl group element $w^P \in W^P$ has the reduced expression
\[
	w^P = s_1 s_2 \dots s_{m-1} s_m s_{m-1} \dots s_2 s_1.
\]
As a consequence of this and the Bruhat lemma, a generic element $\bar u_2$ in $U_-^P$ can be written as a product of elements of $1$-parameter subgroups as follows,
\begin{equation}\label{e:u2barfactb}
	\bar u_2 = y_1(a_1) \dots y_{m-1}(a_{m-1}) y_m(c) y_{m-1}(b_{m-1}) \dots y_1(b_1),
\end{equation}
where $a_i, c, b_j \neq 0$. Thus we define the torus $\mathcal T \subset U_-^P$  to be
\[
	\mathcal T := \{  y_1(a_1) \dots y_{m-1}(a_{m-1}) y_m(c) y_{m-1}(b_{m-1}) \dots y_1(b_1) \mid a_i,c,b_i \in \C^* \}.
\]
Before working out the restriction of the superpotential to this torus, we note that it is natural to think of $\mathcal T$ as embedded in the homogeneous space 
\begin{equation}\label{e:Xcheckcan}
\Xcheckcan:= P\backslash Sp_{2m}(\C)\cong \C\P^{2m},
\end{equation}
via $ \bar u_2 \mapsto  P_{\omega_1} \bar u_2$, setting the stage for the {\it canonical superpotential} to be introduced in the next section. 

 Thus we make the following definition.
 \begin{defn}\label{d:XlusWlus}
We denote the image of the torus $\mathcal T$ in $\Xcheckcan$ by $\Xlus$, and denote the coordinates on $\Xlus$ in the same way as those on $\mathcal T$, by $a_i, c, b_j \neq 
0$. Explicitly,
\[
	\Xlus := \{ P y_1(a_1) \dots y_{m-1}(a_{m-1}) y_m(c) y_{m-1}(b_{m-1}) \dots y_1(b_1) \mid a_i,c,b_i \in \C^* \}.
\]
The restriction of $\Wtheta$ to $\mathcal T$ defines a map
\[
\Wlus:\Xlus\x\C^*_q\to\C.
\]
\end{defn}

\begin{theo}\label{t:LaurentLie}
	In terms of the coordinates $a_i,b_i, c$ on  $\Xlus$,  
  	\begin{equation}\label{e:W1}
     	\Wlus = a_1 + \dots + a_{m-1} + c + b_{m-1} + \dots + b_1 + q \frac{a_1+b_1}{a_1 \dots a_{m-1} c b_{m-1} \dots b_1}.
  	\end{equation} 
\end{theo}

\begin{proof}[Proof of Theorem~\ref{t:LaurentLie}]
	Consider an element $\bar u_2 \in\mathcal T\subset U_-^P$ and choose a $t\in T^{W_P}$ such that $\alpha_1(t)=q$. By definition, $\bar u_2$ admits a factorisation 
	\[
		\bar u_2 = y_1(a_1) \dots y_{m-1}(a_{m-1}) y_m(c) y_{m-1}(b_{m-1}) \dots y_1(b_1).
	\]
Let $z := \theta^{-1}(\bar u_2,t)$, where $\theta$ is the isomorphism from Proposition~\ref{p:u2barz}. Then $z$ can be written as $z=u_1 t \dot w_P \bar u_2$ for some unique $u_1 \in U_+$, and 
	\[
		\Wtheta(\bar u_2,q) = \mathcal{F}(u_1 t \dot w_P \bar u_2) = \sum_{i=1}^m e_i^*(u_1) + \sum_{i=1}^m f_i^*(\bar u_2).
	\]
	The theorem now follows from the lemma below.
	\end{proof}
	
	\begin{lemma}\label{p:eis}
 		If $u_1$ and $\bar u_2$ are as above then we have the following identities  
 		\begin{align}\label{eq:ei1first}
  			f_i^*(\bar u_2) = \begin{cases}
                     			a_i + b_i & \text{if $1 \leq i \leq m$,} \\
                     			c & \text{otherwise.}
                   			  \end{cases}
 		\end{align}
 		\begin{align}\label{eq:ei2first}
  			e_i^* (u_1) = \begin{cases}
                  			0 & \text{if $2 \leq i \leq m$,} \\
                  			q \frac{a_1 + b_1}{a_1 \dots a_{m-1} c b_{m-1} \dots b_1} & \text{if $i=1$}.
                 		  \end{cases}
 		\end{align}
 	\end{lemma}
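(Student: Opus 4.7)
Part~(1) is immediate: the function $f_i^*\colon U_-\to\C$ is additive with $f_i^*(y_j(t))=\delta_{ij}t$ by definition, so one just collects the $y_i$-parameters appearing in the factorisation \eqref{e:u2barfactb}. This yields $a_i+b_i$ from the two $y_i$-factors for $1\leq i\leq m-1$, and $c$ from the unique $y_m$-factor; the case-split in the statement is meant to distinguish $1\leq i\leq m-1$ from $i=m$.

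For part~(2), the plan is to identify $u_1$ explicitly from the defining condition $u_1\dot w_Pd\bar u_2\in B_-\dot w_0$ and then read off $e_i^*(u_1)$. Step~1 is the main one: show that $u_1$ lies in the unipotent radical $U_P$ of the parabolic $P$. The factorisation $z=u_1\dot w_Pd\bar u_2$ has a $U_+^{L_P}$-ambiguity, because replacing $u_1$ by $u_1v^{-1}$ for any $v\in U_+^{L_P}$ and simultaneously left-multiplying $\bar u_2$ by $d^{-1}\dot w_P^{-1}v\dot w_Pd\in U_-^{L_P}$ yields the same $z$. (The essential structural fact is that $w_P$, being the longest element of the type-$C_{m-1}$ Levi Weyl group, acts as $-1$ on $\mathrm{span}(\alpha_2,\ldots,\alpha_m)$, so $\dot w_P$ conjugates $U_+^{L_P}$ into $U_-^{L_P}$.) Requiring $\bar u_2$ to lie in the normal form \eqref{e:u2barfactb}, which bijectively parametrises $U_-\cap B_+(\dot w^P)^{-1}B_+$, kills this ambiguity and places $u_1$ in the canonical complement $U_P$ of $U_+^{L_P}$ in $U_+=U_P\rtimes U_+^{L_P}$.

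Step~2 is then automatic: among the positive roots of $U_P$ --- those with strictly positive $\alpha_1$-coefficient, namely $\varepsilon_1\pm\varepsilon_j$ for $j\geq 2$ and $2\varepsilon_1$ --- only $\alpha_1=\varepsilon_1-\varepsilon_2$ is simple. Since $e_i^*$ is additive and annihilates commutators, it vanishes on every root subgroup for a non-simple positive root, hence on all of $U_P$ for $i\geq 2$. Step~3: for $i=1$, since $u_1\in U_P$ and no other $U_P$-generator acts on column~$2$ of the defining $2m$-dimensional symplectic representation, $e_1^*(u_1)=(u_1)_{1,2}=(u_1)_{2m-1,2m}$ (the last equality by symplecticity). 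Row $2m-1$ of $u_1$ is supported only at columns $2m-1$ and $2m$, so the $(2m-1,1)$-entry of $u_1\dot w_Pd\bar u_2$ --- which must vanish by the $B_-\dot w_0$-condition --- is a single linear equation in $(u_1)_{2m-1,2m}$. Solving it and recognising the resulting parameters as Pl\"ucker coordinates via Lemma~\ref{l:bislemma} ($p_1=a_1+b_1$ and $p_{2m-1}=a_1\cdots a_{m-1}\,c\,b_{m-1}\cdots b_1$) gives $e_1^*(u_1)=q\,p_1/p_{2m-1}=q(a_1+b_1)/(a_1\cdots a_{m-1}\,c\,b_{m-1}\cdots b_1)$; the symbol ``$e^t$'' in the lemma is evidently a typographical placeholder for $q=\alpha_1(d)$.

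The hard part is Step~1, the localisation of $u_1$ in $U_P$. In the $\Q_3$ case of the preceding section this is visible directly in the matrix computation --- the $(2,1)$- and $(2,2)$-entry vanishing conditions force the $x_2$-parameter of $u_1$ to be zero --- and the general case follows either by an analogous direct matrix inspection, using the explicit shape of $\dot w_Pd\bar u_2$ together with Lemma~\ref{l:bislemma}, or more structurally by the $U_+^{L_P}$-ambiguity argument sketched above. Once $u_1\in U_P$ is established, Steps~2 and~3 reduce to the extraction of a single matrix entry and a one-variable linear solve.
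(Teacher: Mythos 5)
Your part~(1) is fine and is exactly what the paper does (it is immediate from the factorisation; the case split in the printed statement should indeed read $1\leq i\leq m-1$ versus $i=m$). For part~(2), however, there is a genuine gap at precisely the point you yourself flag as ``the hard part'': the claim that $u_1\in U_P$. Your ambiguity argument establishes two separate facts --- that the factorisation $z=u_1\dot w_P d\bar u_2$ is rigid up to the $U_+^{L_P}$-action, and that each of the two normalisations (``$\bar u_2$ in the normal form \eqref{e:u2barfactb}'' on one hand, ``$u_1\in U_P$'' on the other) selects a unique representative of that torsor. It does \emph{not} show that they select the \emph{same} representative, which is exactly the assertion you need; as written, ``kills this ambiguity and places $u_1$ in the canonical complement $U_P$'' is a non sequitur. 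The statement is in fact true --- it is visible a posteriori in the explicit matrix for $u_1$ in Lemma~\ref{l:factorisation}, which is supported on the first row and last column, i.e.\ on the root spaces of $U_P$ --- but your only offered route to it in general is ``an analogous direct matrix inspection,'' carried out only for $\Q_3$. Until Step~1 is actually proved, Steps~2 and~3 (which are otherwise sound: $e_i^*$ kills all non-simple root subgroups, and the $(2m-1,1)$-entry of $u_1\dot w_P d\bar u_2$ gives the linear equation yielding $q\,\p_1/\p_{2m-1}$, with $e^t=\alpha_1(d)=q$) have nothing to stand on.

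For comparison, the paper's proof avoids the question of where $u_1$ sits in $U_+$ altogether. It uses $z\cdot v_{\omega_i}^+\in\C^*v_{\omega_i}^-$ to rewrite $e_i^*(u_1)$ as a ratio of pairings $\langle \bar u_2\cdot v_{\omega_i}^+,\dot w_P^{-1}e_i\cdot v_{\omega_i}^-\rangle / \langle \bar u_2\cdot v_{\omega_i}^+, v_{\omega_i}^-\rangle$, and then kills the numerator for $2\leq i\leq m$ by a weight estimate: since $\bar u_2\in B_+(\dot w^P)^{-1}B_+$, the vector $\bar u_2\cdot v_{\omega_i}^+$ has no component below weight $(w^P)^{-1}(\omega_i)$, while $\dot w_P^{-1}e_i\cdot v_{\omega_i}^-$ sits strictly lower because $\ell(w_P^{-1}s_i)>\ell(w_P^{-1})$. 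This only uses the normal form of $\bar u_2$, which is available by the birational maps $\Psi_1,\Psi_2$. If you want to keep your more structural route, you must supply the missing implication (e.g.\ show directly that when $u_1\in U_P$ the corresponding $\bar u_2=d^{-1}\dot w_P^{-1}u_1^{-1}z$ lies in $B_+(\dot w^P)^{-1}B_+$, using $\dot w_P^{-1}U_P\dot w_P=U_P$ and the Bruhat position of $z$), or simply perform the general-$m$ matrix verification.
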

 
	\begin{proof}
 		Equation \eqref{eq:ei1first} is obtained immediately from the definition of $\bar u_2$. For Equation \eqref{eq:ei2first}, let $v_{\omega_{i}}^- $ and $ v_{\omega_i}^+$ denote a lowest, respectively highest, weight vector in $V_{\omega_i}$ and notice that
 		\begin{align*}
  			e_i^*(u_1) & = \frac{\langle u_1^{-1} \cdot v_{\omega_i}^- , e_i \cdot v_{\omega_i}^-\rangle}{\langle u_1^{-1} \cdot v_{\omega_i}^- , v_{\omega_i}^-\rangle} \\
	     			& = \frac{\langle t \dot w_P \bar u_2 \cdot v_{\omega_i}^+ , e_i \cdot v_{\omega_i}^-\rangle}{\langle t \dot w_P \bar u_2 \cdot v_{\omega_i}^+ , v_{\omega_i}^-\rangle}.
 		\end{align*}
		Assume $2 \leq i \leq m$. Then $e_i^*(u_1) = 0$ if and only if $\langle \bar u_2 \cdot v_{\omega_i}^+ , \dot w_P^{-1} e_i \cdot v_{\omega_i}^-\rangle = 0$. Now the vector $w_P^{-1} e_i \cdot v_{\omega_i}^-$ is in the $\mu$-weight space of the $i$-th fundamental representation, where $\mu = w_P^{-1} s_i(-\omega_i)$. Moreover, $\bar u_2 \in B_+ (\dot w^P)^{-1} B_+$, hence $\bar u_2 \cdot v_{\omega_i}^+$ can have non-zero components only down to the weight space of weight $(w^P)^{-1}(\omega_i) = w_P^{-1}(-\omega_i)$. Since $\ell(w_P^{-1} s_i) > \ell(w_P^{-1})$ for $2 \leq i \leq m$, this is higher than $\mu$, which proves that $e_i^*(u_1) = 0$.

 		Now assume $i=1$. We have
 		\begin{align*}
  			e_1^*(u_1) & = \frac{\langle t \dot w_P \bar u_2 \cdot v_{\omega_1}^+ , e_1 \cdot v_{\omega_1}^-\rangle}{\langle t \dot w_P \bar u_2 \cdot v_{\omega_1}^+ , v_{\omega_1}^-\rangle} \\
	     			& = (\omega_1 + \alpha_1 - \omega_1)(t) \frac{\langle \bar u_2 \cdot v_{\omega_1}^+ , \dot w_P^{-1} e_1 \cdot v_{\omega_1}^-\rangle}{\langle \bar u_2 \cdot v_{\omega_1}^+ , \dot w_P v_{\omega_1}^-\rangle} \\
             		& = q \frac{\langle \bar u_2 \cdot v_{\omega_1}^+ , \dot w_P^{-1} e_1 \cdot v_{\omega_1}^-\rangle}{\langle \bar u_2 \cdot v_{\omega_1}^+ , v_{\omega_1}^-\rangle}.
 		\end{align*}
 		First look at the denominator. The only way to go from the highest weight vector $v_{\omega_1}^+$ of the first fundamental representation to the lowest $v_{\omega_1}^-$ is to apply $g \in B_+ w B_+$ for $w \geq (w^P)^{-1}$. Since $\bar u_2 \in B_+ (\dot w^P)^{-1} B_+$, it follows that we need to use all factors of $\bar u_2$, and normalising $ v_{\omega_1}^-$ appropriately, we get
 		\[
  			\langle \bar u_2 \cdot v_{\omega_1}^+ , v_{\omega_1}^-\rangle = a_1 \dots a_{m-1} c b_{m-1} \dots b_1.
 		\]
 		Finally, we look at the numerator $\langle \bar u_2 \cdot v_{\omega_1}^+ , \dot w_P^{-1} e_1 \cdot v_{\omega_1}^-\rangle$. Let $\epsilon_i$ denote the weight of the basis vector $v_i\in V_{\omega_1 }$ when $1\le i\le m$.  The vector $\dot w_P^{-1} e_1 \cdot v_{\omega_1}^-$ has weight 
 		\[
  			\mu' = \dot w_P^{-1} s_1 (-\omega_1) = \dot w_P^{-1} (-\epsilon_2) = \epsilon_2.
 		\]
Indeed, $\dot w_P\inv e_1\cdot v^-_{\omega_1}=v_2$. From the definition of $\bar u_2$, it follows that $\langle \bar u_2 \cdot v_{\omega_1}^+ , v_2\rangle = a_1 + b_1$, which concludes the proof of the lemma and of the theorem.
		\end{proof}

In this section we have  re-expressed  the LG model  $(\Xlie, \Wlie)$ in terms of a regular function on a subvariety of $U_-$, namely we introduced the intermediate LG model $(U^P_-,\Wtheta)$. Then we restricted to a natural choice of torus inside $U^P_-$ to find a simple Laurent polynomial expression, leading us to $(\Xlus,\Wlus)$. 

We are now ready to introduce the canonical mirror.

\section{Construction of the canonical LG model $(\Xcan,\Wcan)$.} \label{s:canIntro}

We now construct the canonical LG model $(\Xcan,\Wcan)$ and state our main comparison theorem. 

\subsection{$\Xcheckcan$ and its affine subvariety $\Xcan$}\label{s:Xcan}
Recall the definition of $\Xcheckcan$ from~\eqref{e:Xcheckcan}, as right homogeneous space for $\Sp_{2m}(\C)$,
\[
\Xcheckcan=P_{\omega_1}\backslash \Sp_{2m}(\C).
\]
If $V=\C^{2m}$ is the defining representation of $\Sp_{2m}(\C)$ as in Section~\ref{SpNotation}, then  $\Xcheckcan$ may equivalently be described as $\P(V^*)$, viewed as an orbit of $\Sp_{2m}(\C)$ acting from the right. 

\begin{remark} We note that on $V^*$ we have both the action from the right (matrix multiplication from the right on the vector space of row vectors), and the action from the left (dual representation of $V$). Namely these are related by $g\cdot v^*=v^*\cdot g\inv$ for $v^*\in V^*$.
\end{remark}

We want to choose fixed coordinates on $\Xcheckcan$. Let $v^0=(0,\dotsc,0,1)$ in $V^*$, so that the line $\langle v^0\rangle_\C\in\P(V^*)$ has stabiliser $P=P_{\omega_1}$.   
We let $w_{(k)}\in W$ be defined by
 	\begin{align*}
 		w_{(k)} = \begin{cases}
        			s_1 s_{2} \dots s_k & \text{if $k \leq m$,} \\
         		s_1 s_2  \dots\dots s_{m-1} s_m s_{m-1}\dotsc s_{2m-k}  & \text{if $m+1 \leq k \leq 2m-1$.}
        		  \end{cases}
 	\end{align*}
This defines a total ordering on the minimal length coset representatives for $W_P\backslash W$. It gives rise to a basis $\{v^0,\dotsc, v^{2m-1}\}$ of $V^*$ where $v^k:= v^0 \cdot \dot w_{(k)}$. Explicitly,
\begin{equation}\label{e:basisVstar}
v^0=(0,\dotsc, 0,1), v^1=(0,\dotsc, 1,0), \dotsc, v^{2m-1}=(1,0,\dotsc, 0).
\end{equation}
We can now introduce notation for the homogeneous coordinates of an element  $x\in \Xcheckcan $, described as a coset $x = P g$, by using the identification $\Xcheckcan=\P(V^*)$ and the basis~\eqref{e:basisVstar}.	
 \begin{defn}\label{def:plucker}
 	For $0 \leq k \leq 2m-1$ and $g\in \Sp_{2m}(\C)$ define the homogeneous coordinates 
 	$\p_k(g)$ for the coset $P g \in \Xcheckcan$ by
 	\[
 		\p_k(g) = \langle v^0 \cdot g, v^k \rangle.
 	\]
Here the angle brackets refer to the coefficient with respect to the basis $\{v^0,\dotsc, v^{2m-1}\}$. 
\end{defn}
Applying this definition, the  homogeneous  coordinates for a coset $P g \in \Xcheckcan$ are just given by the bottom row entries of the matrix $g$ read from right to left. We note that, if as before we write $g$ as $g=u_1t\dot w_P \bar u_2$, then 
\[
	(\p_0(g):\dotsc : \p_{2m-1}(g)) =(\p_0( \bar u_2):\dotsc:\p_{2m-1}( \bar u_2)),
\]
since $Pg=P\bar u_2$. Changing the coset representative only rescales all of the homogeneous coordinates by a common factor.

Finally, note that since the basis elements are of the form $v^k=v^0 \cdot \dot w_{(k)}$ and  these homogeneous coordinates can also be interpreted as generalised minors.

\begin{defn}\label{d:Xcan} We define an affine subvariety of $\Xcheckcan$ by 
\begin{equation*}
 	\Xcan:= \Xcheckcan \setminus D,
\end{equation*}
where $D:=D_0\cup D_1\cup \dotsc \cup D_{m-1}\cup D_m$, the divisors $D_i$ being given by
\begin{align*}
	& D_0 := \left\{ \p_0 = 0 \right\}, \\
	& D_\ell := \left\{ \p_\ell \p_{2m-1-\ell} - \p_{\ell-1} \p_{2m-\ell} + \dots + (-1)^\ell \p_0 \p_{2m-1} = 0 \right\} \text{ for $1\leq \ell \leq m-1$,}\\
 	& D_m := \left\{ \p_{2m-1} = 0 \right\}.
\end{align*}
The divisor $D$ is an anticanonical divisor. Indeed, the index of $\X=\C\P^{2m-1}$ is $2m$. We may also use the notation $\delta_\ell$ for the quadratic expression 
\[
	\delta_\ell := \p_\ell \p_{2m-1-\ell} - \p_{\ell-1} \p_{2m-\ell} + \dots + (-1)^\ell \p_0 \p_{2m-1},
\]
where $0 \leq \ell \leq m-1$. 
\end{defn}

\subsection{The superpotential $\Wcan$ and the isomorphism theorem}\label{s:WcanDef}

In the previous section we defined $\Xcan$ and $\Xcheckcan$ and the homogeneous coordinates $p_i$. To define the `canonical' LG model $(\Xcan, \Wcan)$ it remains to define the superpotential $\Wcan$. 
\begin{defn}
$\Wcan$ is defined to be the  regular map $\Wcan : \Xcan \x \C^*_q\to \C$ expressed in terms of the homogeneous coordinates of $\Xcheckcan$ by 
    \begin{equation}\label{eq:W2}
      	\Wcan = \frac{\p_{1}}{\p_{0}} + \sum_{\ell=1}^{m-1} \frac{\p_{\ell+1} \p_{2m-1-\ell}}{\p_\ell \p_{2m-1-\ell} - \p_{\ell-1} \p_{2m-\ell} + \dots + (-1)^\ell \p_0 \p_{2m-1}} + q\frac{\p_{1}}{\p_{2m-1}}.
   	\end{equation}
If $q$ is fixed we use the notation $\Wcanq$. We refer to the pair $(\Xcan,\Wcan)$ as the `canonical' LG~model.
\end{defn}

The next two sections will be devoted to proving the following comparison theorem between $(\Xlie,\Wlie)$ and $(\Xcan,\Wcan)$. Recall the definition of the subvarieties $Z\subset B_-\dot w_0$ and $U^P_-\subset U_-$ from \eqref{e:Z} and \eqref{e:UPminus}, respectively. 

By Proposition~\ref{p:u2barz} and Section~\ref{s:Wtheta} we have an isomorphism  	$\Xlie\x \C^*_q\overset{\sim}\longrightarrow U_P^-\x\C^*_q$. We use it to define a `comparison' map
 	\begin{equation}\label{e:Psi}
\Psi: 	\Xlie\x \C^*_q\overset{\sim}\longrightarrow U_P^-\x\C^*_q\overset{\pi_{\operatorname{can}}}\longrightarrow  \Xcheckcan\x\C_q^*
 	 	\end{equation}
 	 	where the right hand side map is simply defined by $(\bar u_2,q)\mapsto (P\bar u_2,q)$. All in all the map $\Psi$ is given by
 	 	\begin{equation*}
 	 	(gB_-,q)=(u_1t\dot w_P \bar u_2B_-,q)\mapsto (\bar u_2,q)\mapsto (P\bar u_2,q),
 	 	\end{equation*}
 	 	where $z=u_1t\dot w_P \bar u_2$ is the uniquely (up to $\pm\mathbf 1$) determined element of $Z$ for which $\alpha_1(t)=q$ and $zB_-=gB_-$.
\begin{theo}\label{t:canLie}
 	The map $\Psi$ from \eqref{e:Psi} has image $\Xcan\x \C^*_q$ and defines an isomorphism between $\Xlie\x \C_q^*$ and $\Xcan\x\C_q^*$ such that the following diagram commutes
 	\[
\begin{array}{ccc}
\Xlie\x \C_q^*&\overset{\Psi}\longrightarrow &\Xcan\x\C_q^*\\ 
\qquad\downarrow\Wlie & & \qquad\downarrow \Wcan\\
\C&= & \C.
\end{array} 	
 	\]
In other words, we have an isomorphism $\Xlie\x\C^*_q\to \Xcan\x\C^*_q$ which is the identity on the second factor, and under which  $\Wcan$ pulls back to $\Wlie$.  	\end{theo}

\section{The superpotential $\Wcan$ and the Laurent polynomial $\Wlus$}\label{s:mirror}

In this section we prove a birational version of Theorem~\ref{t:canLie}. To do this we make use of the Laurent polynomial LG model $(\Xlus,\Wlus)$ which is birational to $(\Xlie,\Wlie)$ by construction.

\begin{prop}\label{p:Wrat}
Consider the canonical superpotential $\Wcan$ as the rational function on $\Xcheckcan\x\C^*_q$ given by,
 	\[
 		\Wcanq = \frac{\p_{1}}{\p_0} + \sum_{\ell=1}^{m-1} \frac{\p_{\ell+1} \p_{2m-1-\ell}}{\p_\ell \p_{2m-1-\ell} - \p_{\ell-1} \p_{2m-\ell} + \dots + (-1)^\ell \p_0 \p_{2m-1}} + q\frac{\p_{1}}{\p_{2m-1}}.
 	\]
The restriction of $\Wcan$ to the Lusztig torus $\Xlus\subset\Xcheckcan$ is regular and agrees with $\Wlus$, see Definition~\ref{d:XlusWlus} and Theorem~\ref{t:LaurentLie}. 
\end{prop}
Note that the Laurent polynomial superpotential $\Wlus$ was in fact obtained from $\Wlie$ by restriction of $\Wlie$ to the torus $\Psi\inv(\Xlus\x\C^*_q)$, compare Section~\ref{s:Laurent}. Thus Proposition~\ref{p:Wrat} has the following Corollary.

\begin{cor}\label{c:Wrat} We have a commutative diagram of (rational) maps
 	\[
\begin{array}{ccccc}
\Psi:\Xlie\x \C_q^*&\longrightarrow &\Xlus\x\C_q^* &\hookrightarrow& \Xcheckcan\x\C_q^*\\ 
\qquad\downarrow\Wlie & &\qquad\downarrow \Wlus& & \qquad\downarrow \Wcan\\
\C&= & \C& = &\C.
\end{array} 	
 	\]

\end{cor} \qed

\begin{proof}[Proof of Proposition~\ref{p:Wrat}]
	Consider an element $x=P\bar u_2 \in \Xlus$. By definition of $\Xlus$, $\bar u_2$ admits a factorisation 
	\[
		\bar u_2 = y_1(a_1) \dots y_{m-1}(a_{m-1}) y_m(c) y_{m-1}(b_{m-1}) \dots y_1(b_1).
	\]
	Recall from Definition~\ref{def:plucker} that the homogeneous coordinates are given by
	\[
		\p_k(x) = \langle v^0 \cdot \bar u_2, v^k \rangle=\langle v^0 \cdot \bar u_2,v^0 \cdot \dot w_{(k)}\rangle.
	\]
	Using the factorisation of $\bar u_2$ the following result is immediate.
	\begin{lemma}\label{l:bislemma} 
		Let $0 \leq k \leq 2m-1$ be an integer. Then if $\bar u_2$ possesses a factorisation of the form \eqref{e:u2barfactb} we have
 		\begin{align*}
  			\p_k(\bar u_2) = \begin{cases}
		    						1 & \text{if $k=0$,} \\
                    				a_1 \dots a_{k-1} (a_k + b_k) & \text{if $1 \leq k \leq m-1$,} \\
                    				a_1 \dots a_{m-1} c b_{m-1} \dots b_{2m-k} & \text{otherwise.} 
                   			\end{cases} 
 		\end{align*}\hskip 5cm \vskip -1cm\qed
	\end{lemma}
	\vskip .2cm
	By an easy computation it follows that 
	\[
		\Wcanq(x)=\frac{\p_{1}}{\p_0} + \sum_{\ell=1}^{m-1} \frac{\p_{\ell+1} \p_{2m-1-\ell}}{\p_\ell \p_{2m-1-\ell} - \p_{\ell-1} \p_{2m-\ell} + \dots + (-1)^\ell \p_0 \p_{2m-1}} + q\frac{\p_{1}}{\p_{2m-1}}=\Wlusq(x).
	\]
	Since $\Xlus$ is open dense in $\Xcheckcan$ this completes the proof of the proposition.
\end{proof}

In the next section we will study the locus $\Xcan$ where $\Wcanq$ is regular and prove that $\Psi$ is an isomorphism $\Xlie\x\C^*_q\to\Xcan\x\C^*_q$. This will complete the proof of Theorem~\ref{t:canLie}.

\section{The canonical mirror variety}
\newcommand{\omegacan}{\omega_{can}}

Recall that a element in $\Xcheckcan=P\backslash SL_{2m}(\C)$ has projective coordinates $(\p_0:\p_1:\dots:\p_{2m-1})$ which were introduced in Definition~\ref{def:plucker}.  Then the affine subvariety $\Xcan$ was defined as the complement of a particular anticanonical divisor $D \subset \Xcheckcan$ expressed in these coordinates. Namely  
\[	D:=D_0+D_1+\dotsc + D_{m-1}+ D_m,
\]
with $D_0=\{p_0=0\}, D_m=\{\p_{2m-1}=0\}$ and each remaining $D_\ell$ of the form $D_{\ell}=\{\delta_\ell=0\}$ for a particular quadratic polynomial $\delta_{\ell}$, see Definition~\ref{d:Xcan}. We let $\delta_0=\p_0\p_{2m-1}$.

The goal of this section is to prove the following proposition.

\begin{prop}\label{p:iso}
 	The map from \eqref{e:Psi} defines an isomorphism $\Psi: \Xlie\x\C^*_q \to \Xcan\x \C^*_q$.
\end{prop}

\begin{proof} 
By construction, the canonical superpotential $\Wcanq$ is regular on $\Xcan$ and this is the whole regular locus.
Now from Corollary~\ref{c:Wrat} it follows that $\Psi$ maps $\Xlie\x\C^*_q$ into the regular locus of $\Wcan$. Therefore $\Psi$ must have its image in $\Xcan\x\C^*_q$. We will now prove the result by constructing an inverse map $\Psi\inv:\Xcan\x\C^*_q\to \Xlie\x\C^*_q$.

It suffices to set $q=1$ and construct a map $\Psi_{q=1}\inv:\Xcan\to\Xlie$ for which 
\[
\Psi_{q=1}\inv (P\bar u_2)= zB_-
\]
where $z=u_1\dot w_P\bar u_2\in Z$.  Then $\Psi\inv$ is constructed out of $\Psi_{q=1}\inv$ by setting 
\[
\Psi\inv:\ (P\bar u_2,q)\ \mapsto \ t(q)\Psi_{q=1}\inv (P\bar u_2)\ =\  t(q)zB_-,
\]
where for any $q$ we let $t(q)\in T^{W_P}$ be a torus element for which $\alpha_1(t(q))=q$.

 To construct $\Psi_{q=1}\inv$ we consider the morphism
	\[
		\Phi : \Xcan \to B_- \dot w_0,\qquad (\p_0 : \p_1 : \dots : \p_{2m-1}) \to \Phi(\p_0 : \p_1 : \dots : \p_{2m-1}),
	\]
	where $\Phi(\p_0 : \p_1 : \dots : \p_{2m-1})$ is the matrix of the linear map which to the basis element $v_j$ of $V \cong \C^{2m}$ associates
	\begin{align*}
		\begin{cases}
			\p_{2m-1} v_{2m} & \text{ if $j=1$,} \\
 			(-1)^{j-1}\frac{\delta_{j-1}}{\delta_{j-2}} v_{2m+1-j} + \p_{2m-j}\left(\sum_{\ell=1}^{j-2} (-1)^{\ell}\frac{\p_\ell}{\delta_{\ell-1}}v_{2m-\ell} + v_{2m}\right) &\text{ if $2\leq j\leq m$,} \\
 			(-1)^{j-1}\frac{\delta_{2m-1-j}}{\delta_{2m-j}}v_{2m+1-j} \\
 			+ \p_{2m-j}\left( \sum_{\ell=m+1}^{j-1} (-1)^{\ell-1}\frac{\p_{\ell-1}}{\delta_{2m-\ell}}v_{2m+1-\ell} + \sum_{\ell=1}^{m-1}(-1)^{\ell}\frac{\p_\ell}{\delta_{\ell-1}}v_{2m-\ell} + v_{2m}\right) &\text{if $m+1\leq j\leq 2m-1$,} \\
 			\frac{-1}{\delta_0} v_1 + \sum_{\ell=1}^{m-1} (-1)^{\ell+1}\frac{\p_{2m-1-\ell}}{\delta_\ell} v_{\ell+1} + \sum_{\ell=1}^{m-1} (-1)^{\ell} \frac{\p_\ell}{\delta_{\ell-1}} v_{2m-\ell} + v_{2m} &\text{ if $j=2m$.}
 		\end{cases}
	\end{align*}	
	
Here $\p_0=1$.	Let $\Omega \subset \Xcan$ be the open dense subset where the coordinates $\p_m,\p_{m+1},\dots,\p_{2m-2}$ do not vanish. This subset is isomorphic to the Lusztig torus $\Xlus$ via the change of coordinates
	\begin{align*}
 		a_i = \frac{\p_{2m-1}\delta_{i}}{\p_{2m-1-i}\delta_{i-1}}, \qquad b_i = \frac{\p_{2m-1}}{\p_{2m-1-i}} \text{ for all $1 \leq i \leq m-1$,} \qquad 
 		c = \frac{\p_{m}^2}{\delta_{m-1}}. 
\end{align*}
	
	\begin{lemma}\label{l:factorisation}
 		For any element $(\p_0 :\p_1:\dots:\p_{2m-1}) \in \Omega$, $\Phi(\p_0 :\p_1:\dots:\p_{2m-1})$ factorizes as $u_1 \dot w_P \bar u_2$, where
 		\[
  			\bar u_2 = y_1(a_1)\dots y_{m-1}(a_{m-1}) y_m(c) y_{m-1}(b_{m-1}) \dots y_1(b_1)
 		\]
 		and $u_1$ is given by the matrix
 		\[
  			\begin{pmatrix}
  				1 & \frac{-(a_1+b_1)}{a_1 \dots a_{m-1} c b_{m-1} \dots b_1} & \dots & \frac{-(a_{m-1}+b_{m-1})}{a_1 \dots a_{m-1} c b_{m-1}} & \frac{-1}{a_1 \dots a_{m-1}} & \dots & \frac{-1}{a_1} & \frac{-1}{a_1 \dots a_{m-1} c b_{m-1} \dots b_1} \\
   				&&  &&&&&   \\
   				& 1 &&&&&& \frac{1}{a_1} \\
   				&& \ddots &&&&& \vdots \\
     			&&& 1 &&&& \frac{(-1)^{m}}{a_1 \dots a_{m-1}} \\
				&&  &&&&&   \\
  				&&&& 1 &&& (-1)^{m-1} \frac{a_{m-1}+b_{m-1}}{a_1 \dots a_{m-1} c b_{m-1}} \\
   				&&&&& \ddots && \vdots \\
   				&&&&&& 1 & \frac{-(a_1+b_1)}{a_1 \dots a_{m-1} c b_{m-1} \dots b_1} \\
   				&&&&&&& 1
  			\end{pmatrix}
 		\]
	\end{lemma}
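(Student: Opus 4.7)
The plan is to verify the matrix identity $\Phi(\p_1,\dots,\p_{2m-1}) = u_1 \dot w_P \bar u_2$ directly, by computing the action of both sides on the standard basis $\{v_1,\dots,v_{2m}\}$ of the natural representation $V = \C^{2m}$ and matching column by column.

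The key algebraic input that makes this feasible is the telescoping identity
\[
r_i + r_{i-1} \;=\; \p_i\,\p_{2m-1-i}, \qquad 1 \le i \le m-1,
\]
immediate from the alternating-sum definition of $r_j$ together with $r_0 = \p_{2m-1}$ in the affine chart $\p_0 = 1$. It gives, in particular, a clean closed form for $a_i + b_i$ in terms of the $\p_k$. Combining this with Lemma \ref{l:bislemma} lets one compute the Pl\"ucker coordinates of the candidate $\bar u_2$ and check that they agree projectively with those of $\Phi$ read off as the bottom row of the matrix. This establishes $P\bar u_2 = P\Phi$ in $P\backslash\PSp_{2m}\cong \P(V^*)$, so that some factor of the form $u_1\dot w_P$ with $u_1\in U_+$ must carry $\bar u_2$ to $\Phi$; the substance of the lemma is then to identify $u_1$ explicitly.

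I would then compute $u_1\dot w_P\bar u_2\cdot v_j$ in turn for each of the four ranges $j=1$; $2\le j\le m$; $m+1\le j\le 2m-1$; $j=2m$, mirroring the piecewise definition of $\Phi\cdot v_j$. Each factor $y_i(t)=\exp(tf_i)$ acts as an elementary nilpotent shift in the defining representation of $\mathrm{Sp}(V,J)$, so $\bar u_2\cdot v_j$ is obtained by a direct cascade through only those factors whose $f_i$ acts nontrivially at some stage. The element $\dot w_P$ then acts by a signed permutation on weight vectors, determined by a reduced expression for the longest element of $W_P = \langle s_2,\dots,s_m\rangle$ and by the representative convention $\dot s_i = y_i(-1)x_i(1)y_i(-1)$. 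Multiplication by the strict upper triangular $u_1$ finally contributes only through its stated nontrivial entries, concentrated in the top row and the last column.

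The main obstacle is combinatorial bookkeeping. Within each of the four regimes for $j$ one must track signs coming from the Chevalley representatives $\dot s_i$ and from the alternating signs in $r_j$, and recognise the ratios $r_{j-1}/r_{j-2}$ and $\p_l/r_{l-1}$ appearing in the formula for $\Phi\cdot v_j$ as the surviving quotients of telescoped products of the $a_i$. The explicit entries displayed for $u_1$ are precisely what is needed to cancel the residual $a_l+b_l$ contributions arising when $\dot w_P$ permutes the middle basis vectors, producing exactly the piecewise formulas defining $\Phi\cdot v_j$.
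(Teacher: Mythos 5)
Your proposal is correct and follows essentially the same route as the paper: the paper likewise computes $\bar u_2\cdot v_j$ explicitly from the factorisation $y_1(a_1)\cdots y_m(c)\cdots y_1(b_1)$ (each $y_i(t)=1+tf_i$ in the defining representation) and then verifies $\Phi=u_1\dot w_P\bar u_2$ by a direct, column-by-column computation. Your telescoping identity $r_i+r_{i-1}=\p_i\p_{2m-1-i}$ and the Pl\"ucker-coordinate cross-check via Lemma~\ref{l:bislemma} are useful organising observations that the paper leaves implicit in its ``straightforward, if slightly tedious, computation.''
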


	\begin{proof}[Proof of Lemma~\ref{l:factorisation}]
		Using the definition of the $y_i$, it is easy to check that $\bar u_2 \cdot v_j$ is equal to
 		\begin{align*}
 			\begin{cases}
  				v_j + \sum_{\ell=0}^{m-1-j} (a_{j+\ell} + b_{j+\ell}) \left(\prod_{r=0}^{\ell-1}b_r\right) v_{j+\ell+1} + \sum_{\ell=0}^{m-1} \left(\prod_{r=1}^\ell a_{m-r}\right)& c b_{m-1} \dots b_j v_{m+1+\ell} \\
  				 &\text{if $1\leq j\leq m-1$,} \\
  				v_m + \sum_{k=0}^{m-1} a_{m-k} \dots a_{m-1} c &\text{if $j=m$,} \\
 				v_j + ( a_{2m-j}+b_{2m-j} ) \sum_{\ell=0}^{2m-1-j} \left(\prod_{r=1}^{\ell-1} a_{2m-1-j-r}\right) v_{j+1+\ell} &\text{if $m+1 \leq j \leq 2m$.}
 			\end{cases}
 		\end{align*}
 		Now a straightforward computation shows that $\Phi(\p_0 :\p_1:\dots:\p_{2m-1}) = u_1 \dot w_P \bar u_2$.
	\end{proof}

	Lemma~\ref{l:factorisation} shows that $\Phi(\Omega)$ is contained in $Z_{q=1}:=B_- \dot w_0 \cap U_+  \dot w_P U_-$. In the next lemma we prove that the entire image of $\Phi$ is contained in $Z_{q=1}$.

	\begin{lemma} \label{l:image}
		The image $\Phi(\p_0 :\p_1:\dots:\p_{2m-1})$ of any element $(\p_0 :\p_1:\dots:\p_{2m-1}) \in \Xcan$ lies in $Z_{q=1}$.
	\end{lemma}

	\begin{proof}[Proof of Lemma~\ref{l:image}]
 		Since $\Omega$ is open dense in $\Xcan$ we have that $\Phi(\Xcan) \subset B_- \dot w_0 \cap \overline{U_+  \dot w_P U_-}$. Suppose indirectly there exists $x=(\p_0:\p_1:\dots:\p_{2m-1}) \in \Xcan$ such that $\Phi(x) \not \in U_+  \dot w_P U_-$. Then from Bruhat decomposition, we get $\Phi(x) \dot w_0^{-1} \in U_+ \dot w U_+$ with $w < w_P w_0$. It follows that we must have
		\begin{equation}\label{e:Bruhatzero}
			0= \langle \Phi(x)\dot w_0^{-1} \cdot v_{\omega_1}^+,v_{\omega_1}^- \rangle = \langle \Phi(x) \cdot v_{\omega_1}^-,v_{\omega_1}^- \rangle
		\end{equation}
		in the representation $V_{\omega_1}$ of $G$, where $v^+_{\omega_1}=v_1$ and $v^-_{\omega_1}=v_{2m}$, compare Section~\ref{SpNotation}. However we have seen that the lower right hand corner of the matrix representing $\Phi(x)$ is equal to $1$, contradicting~\eqref{e:Bruhatzero}. Thus we must have $\Phi(x) \in Z_{q=1}$.
	\end{proof}
	
	We have thus shown that the map $\Phi$ is a regular morphism $\Xcan \to Z_{q=1}$. Moreover by construction the composition with the coset map, $\pi: Z_{q=1}\to \Xlie, z\mapsto zB_-$, gives the map
	\[\Psi_{q=1}\inv:\Xcan\to\Xlie\]
	we were looking for. 
	This concludes the proof of Proposition~\ref{p:iso}.
\end{proof}

We have now proved our main comparison result, Theorem~\ref{t:canLie}.

\section{The Jacobi ring presentation of $\qH^*(X,\C)[q\inv]$}\label{s:Jacobi}

The main result of~\cite{rietsch} was to show that there is an isomorphism between the quantum cohomology of $X$ and the Jacobi ring of $(\Xlie,\Wlie)$ (for the case of the quadric $X$ as well as for a general homogeneous space $X=G^\vee/P^\vee$). This result made use of the remarkable Peterson presentation  \cite{peterson} of $\qH^*(G^\vee/P^\vee)$, which identifies 
the quantum cohomology ring with the coordinate ring of an associated affine variety  $\mathcal Y_P\subset G/B$. The variety  $\mathcal Y_P$ is called the `Peterson variety' associated to the parabolic subgroup $P$.

Now that we have proved that the canonical LG model $(\Xcan,\Wcan)$ is isomorphic to the Lie-theoretic LG model $(\Xlie,\Wlie)$, Theorem~\ref{t:canLie}, we may apply the result from~\cite{rietsch} to  deduce that there is an isomorphism between the quantum cohomology of $X=\Q_{2m-1}$ and the Jacobi ring of $(\Xcan,\Wcan)$,
\begin{equation*}
	\C\left[ \Xcan \times \C^*_q \right] / \left( \frac{\partial \Wcanq}{\partial \p_1}, \dots, \frac{\partial \Wcanq}{\partial \p_{2m-1}} \right)\cong\qH^*(X,\C)\left[q^{-1}\right] .
 \end{equation*}
In this section we prove that the isomorphism sends the homogeneous coordinate $\p_i$ to the Schubert class $\sigma_i$. 

\begin{theo}\label{t:Jacobi}
The isomorphism  
 	\begin{equation}\label{e:Jacobi}
  		\C[\Xcan \times \C^*_q] / (\partial \Wcanq) \to \qH^*(X)[q\inv]
 	\end{equation}
defined above, identifies the coordinate $q$ with the quantum parameter $q$ and sends $\p_i$ to the Schubert class $\sigma_i \in H^{2i}(X,\C)$.
\end{theo}

We first prove the following lemma. Note that when we write $p_0$ we mean $1$, since this homogeneous coordinate has been fixed as $p_0=1$.  
\begin{lemma}\label{l:q-rel}
In the Jacobi ring   		$\C[\Xcan \times \C^*_q]/ (\partial \Wcanq)$ of $\Wcan$, the element $\delta_0=p_0 p_{2m-1}$ has the property
\begin{equation}\label{e:q-rel1}
\delta_0^2=p_{2m-1}^2= q^2,\quad \text { and }\quad \delta_0 p_i=q p_i, \ \text{ for $i=1,\dotsc,2m-1$.} 
\end{equation}
For the elements $\delta_\ell$ with $1 \leq \ell \leq m-1$ we have 
\begin{equation}\label{e:q-rel2}
\delta_{\ell}=\p_\ell \p_{2m-1-\ell} - \p_{\ell-1} \p_{2m-\ell} + \dots + (-1)^l \p_0\p_{2m-1}=\begin{cases} q  & \text{ if $\ell$ is odd}\\
 \p_{2m-1} & \text{ if $\ell$ is even,}
\end{cases}
 	\end{equation}
in the Jacobi ring. 
\end{lemma}

\begin{proof} The equations~\eqref{e:q-rel1}  and \eqref{e:q-rel2} for $\p_i$ replaced by $\sigma_i$ are a straightforward consequence of quantum Schubert calculus on the quadric (which can be deduced from the quantum Chevalley formula \cite{FW} in this case), see also \eqref{e:quant-hyperplane}. 
It is not hard to check by a direct calculation that the relations 
\[
\frac{\partial\Wcanq}{\partial \p_i}=0
\]
imply that the $\p_i$  in the Jacobi ring satisfy all the same relations as the $\sigma_i$ do in the quantum cohomology ring.

Alternatively, particularly to prove \eqref{e:q-rel1}, one can check that the coordinates of the critical points of $\Wcanq$ satisfy the  equations. They are all non-degenerate; the quantum cohomology ring is semisimple. These  critical points are worked out explicitly in \cite[Proposition~2.3]{PRW:quadrics}. 
\end{proof}

\begin{proof}[Proof of Theorem~\ref{t:Jacobi}]
While we already noted in the preceding proof that the $\p_i$ satisfy the relations of quantum Schubert calculus in the Jacobi ring of $\Wcan$,
  the statement of the theorem is about a specific isomorphism. It remains to check that this isomorphism  does indeed send  $p_i$ to $\sigma_i$.
  
The isomorphism, 
\begin{equation}\label{e:QcohIso}
\qH^*(X,\C)[q\inv]\longrightarrow  \C[\Xlie \times \C^*_q] / (\partial \Wlieq),
\end{equation} 
coming from \cite{rietsch} and involving the Peterson presentation \cite{peterson} 
takes the following form. The Schubert class $\sigma_k$ is associated to the  element $w^{(k)}\in W^P$ defined by
\begin{align*}
 		w^{(k)} = \begin{cases}
        			s_k s_{k-1} \dots s_1 & \text{if $k \leq m$,} \\
         		s_{2m-k} \dots s_{m-1} s_m s_{m-1} \dots s_1 & \text{if $m+1 \leq k \leq 2m-1$.}
        		  \end{cases}
 	\end{align*}  
Note that $w^{(k)}=w_{(k)}\inv$, compare~Section~\ref{s:Xcan}. 
The  isomorphism \eqref{e:QcohIso} is explicitly given by 
$\sigma_k\mapsto f_k$ where $f_k$ is
the regular function
\[
f_k(zB_-):= \frac{\langle z\cdot v^-_{\omega_1},\dot w^{(k)}\cdot v_{\omega_1}^-\rangle}{\langle z\cdot v^-_{\omega_1}, v_{\omega_1}^-\rangle}.
\] 
Here $v^-_{\omega_1}=v_{2m}$ is the lowest weight vector of the representation $V=V_{\omega_1}$. We may assume that $z=u_1 t(q)\dot w_P\bar u_2$, as in Section~\ref{s:LieNotation}. We need to show that
\[
f_k(zB_-)=p_k(\bar u_2),
\]
whenever $P\bar u_2$ is a critical point of $\Wcanq$ with $q=\alpha_1(t)$.  
We use the notations from  the proof of Proposition~\ref{p:iso}. Recall that there we have a map $\Phi:\Xcan\to B_-\dot w_0$ given explicitly in terms of the coordinates $\p_i$, for which 
\[
t(q)\ \Phi(p_1(\bar u_2),\dotsc, \p_{2m-1}(\bar u_2))=z=u_1 t(q)\dot w_P\bar u_2 \in Z.
\]
We can  now work out $f_k(zB_-)$ by looking at the entries of the last column of the matrix $z=t(q)\Phi(p_1(\bar u_2),\dotsc, \p_{2m-1}(\bar u_2))$. Namely we get
\[
f_k(zB_-)=
\begin{cases}
1,&\text{if $k=0$},\\
\frac{q p_k}{\delta_{k-1}}, &\text{if $ 1\le k\le m$,}\\
\frac{q p_k}{\delta_{2m-k-1}}, & \text{ if $m+1\le k \le 2m-1$,}\\
\frac{q^2}{\delta_0},&\text{ if $k=2m-1$.}
\end{cases}
\]
Applying the relations \eqref{e:q-rel1} and \eqref{e:q-rel2} we get the identity $f_k(zB_-)=\p_k(\bar u_2)$ as required. 
\end{proof}

It is interesting to note that under the isomorphism from Theorem~\ref{t:Jacobi}, the summands of $\Wcan$ map to $p_1$ or $2p_1$ and $\Wcan$ maps to the anticanonical class of $X$ in the quantum cohomology ring. 
Namely for $k=2,\dotsc, m-2$ it follows from the relations that the $k$-th summand of $\Wcan=W_1+W_2+\dotsc + W_{m-1}+qW_m$ in the Jacobi ring simplifies to 
\[
W_k=\frac{\p_{k+1}\p_{2m-1- k}}{\delta_k} = 2\p_1,
\]
while $k=1,m-1, m$ gives 
\[ W_1=p_1, \quad W_{m-1}=\frac{\p_{m}^2}{\delta_m} = \p_1,
\quad W_{m}=q\frac{p_1}{p_{2m-1}}=p_1.
\] 
In total we have
$\Wcan=(2m-1) p_1$ in $\C[\Xcan\x\C^*_q]/(\partial{\Wcanq})$ and $\Wcan$ represents the anticanonical class $(2m-1) \sigma_1$ of $X=Q_{2m-1}$ via the isomorphism~\eqref{e:Jacobi}.

\section{Comparison with other LG models}\label{s:Comparison}

Let us now see how our canonical mirror $(\Xcan,\Wcan)$ and the corresponding Laurent polynomial mirror $(\Xlus,\Wlus)$ compare with previous Landau-Ginzburg models for odd quadrics presented in Section~\ref{s:LGoverview}. From Theorem~\ref{t:canLie} we already know that $(\Xcan,\Wcan)$ is isomorphic to the Lie-theoretic mirror $(\Xlie,\Wlie)$, so it only remains to consider the Givental, Przyjalkowski  mirrors and the partial compactification by Gorbounov and Smirnov.

\vskip .3cm
\paragraph{\bf Comparison with the Givental mirror and Przyjalkovski Laurent polynomial.}
First recall the definition of the Givental mirror of $\Q_{2m-1}$,
\begin{align*}
	\Xgiv &= \left\{ (\nu_1,\dots,\nu_{2m+1}) \in (\C^*)^{2m+1} \mid \prod_{i=1}^{2m+1} \nu_i = q, \nu_{2m}+\nu_{2m+1} = 1 \right\},\\
	 \Wgiv &= \nu_1 + \dots + \nu_{2m-1}.
\end{align*}
The Laurent polynomial `extension' of this mirror, written down in~\cite{przyjalkowski}, is as follows
\[
	\Xprz = (\C^*)^{2m-1},\quad \Wprz = Y_1+\dots+Y_{2m-2}+\frac{(Y_{2m-1}+q)^2}{Y_1\dots Y_{2m-1}}.
\]
It is obtained via the change of coordinates
\begin{align*}
	Y_i = 	\begin{cases}
				\nu_{i+1} &\text{if $1 \leq i \leq 2m-2$,} \\
				q \frac{\nu_{2m}}{\nu_{2m+1}} &\text{if $i=2m-1$.}
			\end{cases}
\end{align*}
The torus $\Xprz$ is slightly larger that $\Xgiv$, as
\[
	\Xgiv \cong \Xprz \setminus \{ Y_{2m-1}+q=0 \}.
\]
Our canonical mirror is a partial compactification of the Givental mirror, as shown by the following result, whose proof is immediate.

\begin{prop}
	The change of coordinates (dependant on $q$)
	\begin{align*}
		Y_i = 	\begin{cases}
					\frac{\p_i}{\p_{i-1}} &\text{ if $1 \leq i \leq m-1$,} \\
					\frac{\p_{2m-1-i}\delta_{2m-3-1}}{\p_{2m-2-i}\delta_{2m-2-i}} &\text{if $m \leq i \leq 2m-3$,} \\
					q\frac{\p_1}{\p_{2m-1}} &\text{if $i=2m-2$,} \\
					q\frac{\delta_{2m-2}}{\delta_{m-1}} &\text{if $i=2m-1$,}
				\end{cases}
	\end{align*}
	induces an isomorphism between $\Xprz$ and the torus inside $\Xcan$ where the Pl\"ucker coordinates $\p_i$ for $0 \leq i \leq m-1$ are all non-zero.
\end{prop}

Thus we have two distinguished tori inside the canonical mirror variety $\Xcan$, namely $\Xprz$ and the Lusztig torus $\Xlus$. They are however distinct, since $\Xlus$ is the torus inside $\Xcan$ where the Pl\"ucker coordinates $\p_i$ for $m \leq i \leq 2m-1$ are all non-zero. In fact, $\Xcan$ is an example of a cluster variety, see~\cite[Section 12]{GLS-Partial}, so it contains several tori.

\vskip .3cm
\paragraph{\bf Comparison with the Gorbounov-Smirnov mirror.} 
The Landau-Ginzburg model $(\XGS,\WGS)$ from~\cite{GS} goes as follows
\[
	\XGS = \{ (x;y_1,\dots,y_{m-1};z_1,\dots,z_{m-1}) \in \C^m \times (\C^*)^{m-1} \mid xy_1\dots y_{m-1}-1 \neq 0 \},
\]
\begin{equation*}
	 \WGS = \sum_{i=1}^{m-1} y_i(1+z_i) + q \frac{x^2}{(x y_1 y_2 \dots y_{m-1} - 1)z_1 z_2 \dots z_{m-1}}.
\end{equation*}
Consider the change of coordinates:
\begin{align*}
 	& y_i = \frac{\p_i}{\p_{i-1}} \qquad\forall\; 1 \leq i \leq m-1 ; \\
 	& z_1 = q\frac{\p_0}{\p_{2m-1}} ; \qquad z_i = \frac{\delta_{i-2}}{\delta_{i-1}} \qquad\forall\; 2 \leq i \leq m-1 ; \\
 	& x = \frac{\p_0\p_m}{\delta_{m-1}}.
\end{align*}

\begin{prop}\label{p:Comparison}
	The change of coordinates $ \{ \p_i \}\mapsto\{ x,y_i,z_i \} $ above defines an isomorphism between the torus  $\{ \p_1 \dots \p_{m-1} \neq 0 \}$ and $\{ y_1 \dots y_{m-1} \neq 0 \}$ inside $\XGS$ and the torus inside $\Xcan$, which pulls back the Gorbounov-Smirnov superpotential $\WGS$ to $\Wcan$. Moreover when $m=2$ the change of coordinates is a well-defined isomorphism between $\Xcan$ and $\XGS$.
\end{prop}

\begin{proof}
	We have $y_1(1+z_1) = \p_1 + q\frac{\p_1}{\p_{2m-1}}$, and $y_i (1+z_i) = \frac{\p_i \p_{2m-i}}{\delta_{i-1}}$ for $2 \leq i \leq m-1$. Moreover
 \[
  		x y_1 \dots y_{m-1}-1 = \frac{\delta_{m-2}}{\delta_{m-1}},\qquad
  		z_1 \dots z_{m-1} = \frac{q}{\delta_{m-2}},\qquad
  		x^2 = \frac{\p_m^2}{\left( \delta_{m-1} \right)^2},
 	\]
 	which gives
 	\[
  		q \frac{x^2}{(x y_1 y_2 \dots y_{m-1} - 1)z_1 z_2 \dots z_{m-1}} = \frac{\p_m^2}{\delta_{m-1}},
	 \]
 	hence the change of coordinates maps $\WGS$ to $\Wcan$. Finally, the change of coordinates is well-defined on the tori, and so is its inverse,
 \begin{align*}
 	&	\p_0=1,\qquad \p_i = \prod_{j=1}^{m-1} y_j \ \text{ for } 1 \leq i \leq m-1,\qquad \p_m=\frac{q}{z_1\dots z_{m-1}}\frac{x}{xy_1\dots y_{m-1}-1} \\
 &		\p_{2m-1-i} = \frac{q(1+z_{i+1})}{y_1 z_1 \dots y_i z_i}\  \text{ for } 1 \leq i \leq m-2,\qquad \p_{2m-1}=\frac{q}{z_1},
 	\end{align*}
	which concludes the proof.
\end{proof}
It follows from Proposition~\ref{p:Comparison} that the LG models $(\Xcan,\Wcan)$ and $(\XGS,\WGS)$ are isomorphic in the case of the three-dimensional quadric $\Q_3$, but in the general case we only get a birational equivalence.

\section{The $A$-model and $B$-model connections}\label{s:consequences}

Recall that we have proved that the canonical LG model $(\Xcan,\Wcan)$ is isomorphic to the Lie-theoretic LG model $(\Xlie,\Wlie)$; hence using~\cite{rietsch} we deduced that there is an isomorphism between the quantum cohomology of $X=\Q_{2m-1}$ and the Jacobi ring of $(\Xcan,\Wcan)$, 
\[
	\qH^*(X,\C)\left[q^{-1}\right] \cong \C\left[ \Xcan \times \C^*_q \right] / \left( \frac{\partial \Wcanq}{\partial \p_1}, \dots, \frac{\partial \Wcanq}{\partial \p_{2m-1}} \right).
\]
Furthermore we have proved that the isomorphism is given by mapping the Schubert class $\sigma_i \in H^{2i}(X,\Z)$ to the Pl\"ucker coordinate $\p_i$.

We may now prove a more detailed mirror theorem by comparing two flat connections, one related to $X=\Q_{2m-1}$ and one constructed from $(\Xcan,\Wcan)$. Let $\HH_A$ be the sheaf of regular functions of the trivial vector bundle with fibre $H^*(X,\C)$ over $\C_\hbar^* \times \C_q^*$, the two-dimensional complex torus with coordinates $\hbar$ and $q$. The \emph{A-model connection} ${}^A\nabla$, also known as the Dubrovin connection, is defined on $\HH_A$ by
\begin{align*}
	{}^A \nabla_{q \partial_q} &= q \frac{\partial}{\partial q} + \frac{1}{\hbar} p_1 \star_q \bullet, \\
 	{}^A \nabla_{ \hbar \partial_\hbar} &= \hbar \frac{\partial}{\partial \hbar} + \mathrm{gr} - \frac{1}{\hbar} c_1(TX) \star_q \bullet,
\end{align*}
where $\mathrm{gr}$ is a diagonal operator on $H^*(X,\C)$ given by $\mathrm{gr}(\alpha)=k\alpha$ for $\alpha \in H^{2k}(X,\C)$. Here we are using the conventions of \cite{iritani}. Let $\HH_A^\vee$ be the vector bundle on $\C_\hbar^*\x\C_q^*$ defined by $\HH_A^\vee=j^*\HH_A$ for $j:(\hbar,q)\mapsto (-\hbar, q)$. This vector bundle with the pulled back connection ${}^A\nabla^\vee=j^*\left({ }^A{\nabla}\right)$ is dual to $(\HH_A,{}^A\nabla)$ via the flat non-degenerate pairing,
\[
 	\left< \sigma_j,\sigma_k\right>= (2\pi i \hbar)^{2m-1} \int_{[X]} \sigma_j\cup \sigma_k=(2\pi i \hbar)^{2m-1} \delta_{j+k,2m-1}.
\]
The dual A-model connection ${}^A \nabla^\vee$ defines a system of differential equations which we call the (small) quantum differential equations
\begin{equation}\label{e:qde}
 	{}^A \nabla^\vee_{q \partial_q} S = 0. 
\end{equation}

Let us now define a $\C[\hbar^{\pm 1},q^{\pm 1}]$-module
\[
 	\mathcal G := \Omega^N\left(\Xcan\right)[\hbar^{\pm 1},q^{\pm 1}]/\left( d - \frac{1}{\hbar}d\Wcanq \wedge \bullet\right) \Omega^{N-1}\left(\Xcan\right)[\hbar^{\pm 1},q^{\pm 1}],
\]
where $\Omega^k\left(\Xcan\right)$ is the space of algebraic $k$-forms on $\Xcan$. We denote by $\HH_B$ the sheaf with global sections $\mathcal G$. 
The $B$-model connection, or Gauss-Manin connection, on $\HH_B$ is given by
\begin{align*}
 	{}^B \nabla_{q\partial_q} [\eta] &= q \frac{\partial}{\partial q} [\eta] + \frac{1}{\hbar} \left[ q\frac{\partial \Wcanq}{\partial q} \eta \right], \\
 	{}^B \nabla_{\hbar\partial_\hbar} [\eta] &= \hbar\frac{\partial}{\partial \hbar} [\eta] - \frac{1}{\hbar} \left[ \Wcanq \eta \right].
\end{align*}
Since $\Xcan$ is a cluster variety there exists a unique up to a scalar non-vanishing $N$-form on $\Xcan$ with simple poles along the boundary, which we denote by $\omegacan$, compare~\cite{LamSpeyerI}. Explicitly, in terms of homogeneous coordinates we write
\[
	\omegacan := \bigwedge_{i=1}^{m-1}\frac{ d\p_i}{\p_i} \wedge \bigwedge_{i=1}^{m-1} \frac{d\delta_i}{\delta_i} \wedge \frac{d\p_{2m-1}}{\p_{2m-1}}.
\]
\begin{theo}\label{t:connections}
The map
\begin{equation*}
\begin{array}{cccc}
	\Psi :& (\HH_A,{}^A\nabla)& \to &(\HH_B,{}^B\nabla)\\ 
	&\sigma_j &\mapsto & [\p_j\omegacan]
\end{array}
\end{equation*}
is an injective homomorphism of bundles with connection. Moreover, in the case of the three-dimensional quadric $\Q_3$ it is an isomorphism.
\end{theo}

\begin{proof}
We use the cluster variety structure of the mirror of $\Q_{2m-1}$. Namely, the coordinate ring $\C[\Xcan]$ has a cluster algebra structure of type $A_1^{m-1}$, which is described in detail in \cite[Section 2]{GLS-Survey} and \cite[Section 12]{GLS-Partial}. Consider the following initial quiver:
	\begin{center}\label{fig:clusterquiver}
		\begin{tikzpicture}[every node/.style={thick,circle,inner sep=1pt,minimum size=30pt,scale=0.66},scale=0.66]
			\node[draw=black] (0,0) (1){$\p_1$};
			\node[draw=black,right=of 1] (2){$\p_2$};
			\node[draw=none,right=of 2] (3){$\dots$};
			\node[draw=black,right=of 3] (4){$\p_{m-2}$};
			\node[draw=black,right=of 4] (4bis){$\p_{m-1}$};
			\node[draw=black,below=of 1] (6){$\delta_1$};
			\node[draw=black,left=of 6] (5){$p_{2m-1}$};
			\node[draw=black,right=of 6] (7){$\delta_2$};
			\node[draw=none,right=of 7] (8){$\dots$};
			\node[draw=black,right=of 8] (9){$\delta_{m-2}$};
			\node[draw=black,right=of 9] (10){$\delta_{m-1}$};
 	    	\path (5) edge[->,thick] (1);
 	    	\path (1) edge[->,thick] (6);
 	    	\path (6) edge[->,thick] (2);
  			\path (2) edge[->,thick] (7);
  	    	\path (7) edge[->,thick] (3);
  	    	\path (8) edge[->,thick] (4);
 	    	\path (4) edge[->,thick] (9);
 	    	\path (9) edge[->,thick] (4bis);
    		\path (4bis) edge[->,thick] (10);
 		\end{tikzpicture}
	\end{center}
	Here the initial cluster variables correspond to the vertices in the top row of the quiver, while the frozen variables (or coefficients) correspond to the vertices in the bottom row. In particular, it is of finite type, and there are $2^{m-1}$ different clusters, consisting of
	\begin{itemize}
		\item the cluster variables $r_1,\dots,r_{m-1}$, where $r_i \in \{\p_i,\p_{2m-1-i}\}$;
		\item the frozen variables (or coefficients) $\delta_1,\dots,\delta_{m-1}$, and $\p_{2m-1}$.
	\end{itemize}
	Moreover we have set $\p_0=1$. The exchange relations are
	\begin{align}\label{e:mutations}
		\p_i \p_{2m-1-i} = 	\begin{cases}
								\p_{2m-1} + \delta_1 & \text{for $i=1$;}\\
								\delta_{i-1}+\delta_i & \text{for $2 \leq i \leq m-1$.}
							\end{cases}
	\end{align}
	We need to prove that $\Psi$ maps the $A$-model connection to the $B$-model connection. We use a change of coordinates to reduce the problem to checking only the action of $q\partial_q$. Namely, this follows by replacing $(\p_i,q,\hbar)$ with $(\mathbf p_i,\mathbf q,\hbar)$, where 
	\[
		\mathbf p_i=\hbar^{-i}\p_i,\qquad \mathbf q=\hbar^{1-2m}q, \qquad \hbar=\hbar,
	\]
	and observing that written in these coordinates the Gauss-Manin system for $\frac 1\hbar \Wcanq$ no longer involves the $\hbar$.  

	Now we check that the map $\Psi$ preserves the action of $q\partial_q$. We consider the following identities in $\qH^*(\Q_{2m-1},\C)$, which are a special case of results in \cite{FW}:
\begin{align}\label{e:quant-hyperplane}
	\sigma_1 \star_q \sigma_i = 	\begin{cases}
										\sigma_{i+1} & \text{for $0 \leq i \leq m-2$ or $m \leq i \leq 2m-3$;} \\
										2\sigma_m & \text{for $i=m-1$;} \\
										\sigma_{2m-1} + q & \text{for $i=2m-2$;} \\
										q \sigma_1 & \text{for $i=2m-1$.}
								  \end{cases}
\end{align}	
	We need to prove that there are similar identities on the $B$ side:
	\begin{align}\label{e:identities-W}
		\left[q\frac{\partial \Wcanq}{\partial q} \p_i \omegacan\right] = 	\begin{cases}
																	[\p_{i+1} \omegacan] & \text{for $0 \leq i \leq m-2$ or $m \leq i \leq 2m-3$;} \\
																	[2\p_{m-1} \omegacan] & \text{for $i=m-1$;} \\
																	[(\p_{2m-1} + q) \omegacan] & \text{for $i=2m-2$;} \\
																	[q \p_1 \omegacan] & \text{for $i=2m-1$,}
							 									\end{cases}
	\end{align}
	where $\omegacan$ is the canonical $(2m-1)$-form on $\Xcan$. The proof of these identities on the $B$ side proceeds by constructing closed $(2m-2)$-forms $\nu_i$ such that the relation corresponding to $p_i$ will follow from the fact that 
	\begin{equation*}
		[d \Wcanq \wedge \nu_i]=[( d +  d\Wcanq \wedge - )\nu_i]= 0.
	\end{equation*}
	(The first equality above comes from the fact that $\nu_i$ is closed, and the second comes from the definition of the $B$-model.)

	Concretely, we will pick a cluster $\mathcal C$ containing a particular Pl\"ucker coordinate, say $p_i$, and use the following Ansatz for constructing $\nu_i$. We define a vector field
	\begin{equation*}
		\xi_i= p_i \left(\sum_{c\in\mathcal C\setminus\{p_i\}} m_c c\partial_c\right) 
	\end{equation*}  
	and an associated $(2m-2)$-form by insertion $\nu_i=\iota_{\xi_i}\omegacan$. Here the $m_c$'s are constants and $\iota$ is the interior product. 
	
	\begin{lemma}\label{l:nu}
		The forms $\nu_i$ are closed regular $(2m-2)$-forms of $\Xcan$.
	\end{lemma}
	
	\begin{proof}[Proof of Lemma~\ref{l:nu}]
		 On the torus $T_\mathcal{C}$ associated with the cluster $\mathcal{C}$, the canonical $(2m-1)$-form $\omegacan$ may be written as
		 \[
		 	\omegacan = \bigwedge_{p \in \mathcal C} \frac{dp}{p}.
		 \] 
		 For $c\in \mathcal C$, we have $\iota_{c \partial_c} \omegacan = \bigwedge_{p \in \mathcal C \setminus \{c\}} \frac{dp}{p},$ and so $\nu_i$ is a $\C$-linear combination of terms of the form $p_i \bigwedge_{p\in \mathcal C \setminus \{c\}} \frac{dp}{p}$ for $c \neq p_i$. Since $p_i$ lies in $\mathcal C \setminus \{c\}$, such a term is closed, hence so is $\nu_i$.
		 
		 Moreover, $\nu_i$ is regular on $T_\mathcal{C}$, and we will show it is also regular on $T_\mathcal{C}'$ for any cluster $\mathcal{C}'$ obtained from $\mathcal{C}$ by a single mutation. Indeed, $\nu_i$ is a linear combination of terms of the form 
		 \[
		 	 \p_i \bigwedge_{p \in \mathcal{C} \setminus \{\p_j\}} \frac{dp}{p}
		 \]
		 for $0 \leq i \neq j \leq 2m-1$. If $\mathcal{C}'$ is the cluster obtained by mutating some $p \in \mathcal{C}$, differentiating the exchange relation gives an identity of the form $pdq+qdp=0$, hence
		 \[
		 	\p_i \bigwedge_{p \in \mathcal{C} \setminus \{\p_j\}} \frac{dp}{p} = \pm \p_i \bigwedge_{p \in \mathcal{C}' \setminus \{\p_j\}} \frac{dp}{p}.
		 \]
		 The right-hand side is regular on $T_{\mathcal{C}'}$ as claimed.
		 
		 Thus we have proved that $\nu_i$ is regular on all cluster tori which are `adjacent' to $T_\mathcal{C}$. Using~\cite[Proposition 9.6]{LamSpeyerI} we conclude that $\nu_i$ is regular on the whole of $\Xcan$, which concludes the proof of the lemma.
	\end{proof}
	
	Now since $d \Wcanq \wedge \omegacan = 0$, we have that $d \Wcanq \wedge \nu_i = \pm d \Wcanq(\xi_i) \omegacan$. It follows that 
	\begin{equation*}
		d \Wcanq \wedge \nu_i=  p_i \left(\sum_{c\in\mathcal C\setminus\{p_i\}} m_c c \frac{\partial \Wcanq}{\partial c} \right)\omegacan.  
	\end{equation*}
	Therefore e.g. in order to prove that $\left[q\frac{\partial \Wcanq}{\partial q} \p_i \omegacan\right]-[p_{i+1} \omegacan] = 0$, we will show that $q\frac{\partial \Wcanq}{\partial q} \p_i  - p_{i+1}$ has the form  $p_i \left(\sum_{c\in\mathcal C\setminus\{p_i\}} m_c c \frac{\partial \Wcanq}{\partial c} \right) $, for some choice of coefficients $m_c$.
  
	To prove these identities, we will work with two clusters:
	\begin{itemize}
		\item the initial cluster $\mathcal{C}_1 = \{ p_1,\dots,p_{m-1},\delta_1,\dots,\delta_{m-1}, \p_{2m-1} \}$;
		\item the cluster $\mathcal{C}_2 = \{ p_{2m-2},\dots,p_m,\delta_1,\dots,\delta_{m-1}, \p_{2m-1}\}$.
	\end{itemize}

	Let us first start with $\mathcal{C}_1$ and express $\Wcanq$ in terms of it using the exchange relations \eqref{e:mutations}. To simplify our notation we let $\delta_0$ denote $\p_{2m-1}$.
	\[
		\Wcanq =  \p_1  + \sum_{\ell=1}^{m-1} \left( \frac{\p_{\ell+1}\delta_{\ell-1}}{\p_\ell \delta_\ell} + \frac{\p_{\ell+1}}{\p_\ell} \right) + q \frac{\p_1}{\delta_0}.
	\]
	The partial derivatives of $\Wcanq$ are:
	\begin{align*}
		q\frac{\partial \Wcanq}{\partial q} &= q\frac{p_1}{\delta_0},\\
		\p_1 \frac{\partial \Wcanq}{\partial \p_1} &= \p_1-\frac{\p_2\delta_0}{\p_1\delta_1}-\frac{\p_2}{\p_1}+q\frac{\p_1}{\delta_0}, \\
		\p_i \frac{\partial \Wcanq}{\partial \p_i} &= \frac{\p_i\delta_{i-2}}{\p_{i-1}\delta_{i-1}}+\frac{\p_i}{\p_{i-1}}-\frac{\p_{i+1}\delta_{i-1}}{\p_i\delta_i}-\frac{\p_{i+1}}{\p_i} \text{ for $2 \leq i \leq m-1$}, \\
		\delta_0 \frac{\partial \Wcanq}{\partial \delta_0} &= \frac{\p_2\delta_0}{\p_1\delta_1} - q\frac{\p_1}{\delta_0}, \\
		\delta_i \frac{\partial \Wcanq}{\partial \delta_i} &= -\frac{p_{i+1}\delta_{i-1}}{\p_i\delta_i} + \frac{\p_{i+2}\delta_i}{\p_{i+1}\delta_{i+1}} \text{ for $1 \leq i \leq m-1$}.
	\end{align*}
	Hence 
	\[
		q \frac{\partial \Wcanq}{\partial q} p_{i} - p_{i+1} =  -\p_{i} \left(
\sum_{j=i+1}^{m-1} \p_j \frac{\partial \Wcanq}{\partial \p_j} + \sum_{j=0}^{m-1} \delta_j \frac{\partial \Wcanq}{\partial \delta_j} + \sum_{j=i}^{m-1} \delta_j \frac{\partial \Wcanq}{\partial \delta_j} \right)
	\]
	for $0 \leq i \leq m-2$, and
	\[
		q \frac{\partial \Wcanq}{\partial q} p_{m-1} - 2\p_m =  
 -\p_{m-1} \left( \sum_{j=0}^{m-1} \delta_j \frac{\partial \Wcanq}{\partial \delta_j} \right).
	\]
	Since the right-hand sides of the above equations have the form $p_i \left(\sum_{c\in\mathcal C\setminus\{p_i\}} m_c c\partial_c \Wcanq\right) $, this proves identity \eqref{e:identities-W} for $0 \leq i \leq m-2$.

	To prove the remaining identities, we use the cluster  $\mathcal{C}_2$. In this cluster chart, $\Wcanq$ takes the following form:
	\begin{align*}
		\Wcanq = & \frac{\delta_0}{\p_{2m-2}} + \frac{\delta_1}{\p_{2m-2}} + \sum_{\ell=1}^{m-1} \left( \frac{\p_{2m-1-\ell}}{\p_{2m-2-\ell}} + \frac{\p_{2m-1-\ell}\delta_{\ell+1}}{\p_{2m-2-\ell}\delta_\ell}\right) + \frac{q}{\p_{2m-2}} + \frac{q\delta_1}{\p_{2m-2}\delta_0}.
	\end{align*}
	Working out the partial derivatives of $\Wcanq$ as before, we get
	\[
		q \frac{\partial \Wcanq}{\partial q} p_i -p_{i+1} = p_i \left(
-\sum_{j=i+1}^{2m-2} \p_j\frac{\partial \Wcanq}{\partial \p_j} - \sum_{j=0}^{2m-2-i} \delta_j \frac{\partial \Wcanq}{\partial \delta_j} \right)\text{for $m \leq i \leq 2m-3$.}
	\]
	Recall that $\delta_0$ is $p_{2m-1}$. The final two relations are
	\begin{align*}
 		q\frac{\partial \Wcanq}{\partial q} \p_{2m-2} - (p_{2m-1}+q) &= -p_{2m-2}\delta_0 \frac{\partial \Wcanq}{\partial \delta_0}
 \text{\quad and } \\
		q \frac{\partial \Wcanq}{\partial q}\p_{2m-1} - q p_1 &=0.
	\end{align*}
	This gives us the identities \eqref{e:identities-W} for $m-1 \leq i \leq 2m-1$, which concludes the proof of the homomorphism. To deduce that the map is injective as claimed we  use the same strategy as in \cite[Lemma~9.3]{MR}. Namely, observe that the relations in the Gauss-Manin system recover the relations of the Jacobi ring as $\hbar$ tends to zero. On the other hand as we already proved, the Jacobi ring is isomorphic to quantum cohomology with the homogeneous coordinates $\p_i$ playing the role of the Schubert basis. Therefore we see that the $[\p_i\omegacan]\in \mathcal G$ are linearly independent in the $\hbar \to 0$ limit. Hence they must be linearly independent already in $\mathcal G$. 
	
In the case of $Q_3$ the added surjectivity result is a consequence of the fact that $(\Xcan,\Wcan)$ is isomorphic to the Gorbounov-Smirnov mirror in that case, see Proposition~\ref{p:Comparison}.  Indeed $\WGS$ is cohomologically tame \cite{GS}, hence so is $\Wcan$. Therefore $\mathcal G$ is a free $\C[\hbar^{\pm 1},q^{\pm 1}]$-module of rank $2m$ (cf. \cite{sabbah}), and $\HH_B$ a trivial vector bundle of that dimension. 
	\end{proof}

Let $\Gamma_0$ be a compact oriented real $(2m-1)$-dimensional submanifold of $\Xcan$ representing a cycle in $H^{2m-1}(\Xcan,\Z)$ dual to $\omegacan$, in the sense that $\frac{1}{(2 i \pi)^{2m-1}} \int_{\Gamma_0} \omegacan=1$. From Theorem~\ref{t:connections} we deduce the following formula. 
\begin{cor}\label{c:int}
	The integral formula
 	\[
  		S_0(\hbar,q) = \frac{1}{(2i \pi \hbar)^{2m-1}}\sum_{j=0}^{2m-1}\left( \int_{\Gamma_0} e^{\frac{\Wcanq}{\hbar}} p_j\omegacan\right) \sigma_{2m-1-j}
 	\]
	describes a solution to the quantum differential equation \eqref{e:qde}.  \qed
\end{cor}
The corollary follows as in \cite[Theorem~4.2]{MR}. If we replace $(\Xcan,\Wcan)$ by the isomorphic LG model $(\Xlie,\Wlie)$ the above corollary implies a special case of
 \cite[Conj. 8.1]{rietsch} for odd-dimensional quadrics.

\section{The mirror to $\Q_3$}

In this section we work out in detail the example of the three-dimensional quadric, $\Q_3$, to illustrate our main results. 

\vskip.2cm
\paragraph{\bf The Laurent polynomial mirror $(\Xlus,\Wlus)$.}
Recall from Section~\ref{s:Lie} the definition of the variety $Z \subset G$,
\[
	Z := B_- \dot w_0 \cap U_+ T^{W_P} \dot w_P U_-.
\]
A generic element $g \in Z$ can be written as $g = u_1 t \dot w_P \bar u_2$, where 
\[
  \bar u_2 = y_1(a)y_2(c) y_1(b),
\]
and $a,b,c$ are non-zero, i.e.
\[
 	\bar u_2=	\begin{pmatrix}
      				1         & 0     & 0        & 0 \\
      				a + b & 1     & 0        & 0 \\
     	 			c b     & c     & 1        & 0 \\
     	 			a c b & a c & a +b & 1
     			\end{pmatrix}.
\]
From this the expression of the Laurent polynomial mirror follows, namely,
\[
	\Xlus=(\C^*)^3_{a,b,c}, \Wlus=a+b+c+\frac{a+b}{abc}.
\]
This illustrates Theorem~\ref{t:LaurentLie} in the case of $\Q_3$.

\vskip.2cm
\paragraph{\bf The canonical mirror.}
The map $Z_t \to \X \cong\C\P^3$ takes $z=u_1 t \dot w_P\bar u_2$ to $P z=P \bar u_2$. This may be interpreted as taking $z$ to the span of the reverse row vector corresponding to the last row of $\bar u_2$ after the identification  $\X \cong\C\P^3$. The homogeneous coordinates of $\bar u_2$ are given by $\p_0=1, \p_1=a+b, \p_2=ac, \p_3=a c b$.

The image of $Z_{q=q_0}$ in $\C\P^3$ is independent of $q_0$, so we may choose $q_0$ to be $1$, and restrict our attention to $Z_{q=1}:=B_-\dot w_0\cap U_+\dot w_P U_-$. The image of $Z_{q=1}$ is obtained in coordinates $(\p_0:\p_1:\p_2:\p_3)$ by removing the anticanonical divisor 
\[
	D := \{\p_0 =0\}\cup\{\p_3 \p_0 - \p_2 \p_1=0\}\cup \{\p_3=0\}.
\]
Thus
\[
	\Xcan = \{ (\p_0:\p_1:\p_2:\p_3) \in\C\P^3 \mid \p_0(\p_1\p_2-\p_0\p_3)\p_3 \neq 0 \}
\]
In terms of the homogeneous coordinates, the canonical superpotential from Equation~\eqref{eq:W2} is given by
\[
 	\Wcanq = \frac{\p_1}{\p_0}+ \frac{\p_2^2}{\p_1 \p_2 - \p_0 \p_3}+ q \frac{\p_1}{\p_3}.
\]

\vskip .2cm
\paragraph{\bf Comparison with the Gorbounov-Smirnov-mirror.}
The mirror $(\XGS,\WGS)$ for $\Q_3$ is
\[
 	\XGS= \{ (x,y,z) \in \C^3 \mid (xy-1)z \neq 0 \} , \WGS = y(1+z) + q \frac{x^2}{(x y - 1)z}.
\]
It corresponds to $(\Xcan,\Wcan)$ via the change of coordinates (setting $\p_0=1$):
\[
 	x = \frac{\p_2}{\p_1 \p_2 - \p_3} ;\quad y =\p_1 ;\quad z = \frac{q}{\p_3}.
\]
This change of coordinates is well-defined on $\Xcan$, and its inverse,
\[
	\p_1=y ;\quad \p_2 = \frac{qx}{(xy-1)z};\quad \p_3 = \frac{q}{z}
\]
is well-defined on $\XGS$. This illustrates the result of Proposition~\ref{p:Comparison}, namely that for $\Q_3$ the LG models $(\Xcan,\Wcan)$ and $(\XGS,\WGS)$ are isomorphic, even though that is not the case in higher dimension.

\vskip .2cm
\paragraph{\bf Isomorphism between the quantum cohomology and the Jacobi ring.}
Recall that the cohomology of $\Q_3$ is generated by the Schubert classes $\sigma_i \in H^{2i}(\Q_3,\Z)$ for $i=0,\dots,3$. Moreover it has a presentation:
\[
	H^*(\Q_3,\Z)=\Z[\sigma_1,\sigma_2]/(\sigma_1^4,\sigma_1^2-2\sigma_2),
\]
and the quantum cohomology of $\Q_3$ is presented as follows,
\[
	\qH^*(\Q_3,\C)=\C[\sigma_1,\sigma_2,q]/(\sigma_1^4-q\sigma_1,\sigma_1^2-2\sigma_2).
\]
Now the Jacobi ring of $(\Xcan,\Wcan)$ is 
\[
	\C[\Xcan \times \C_q^*] / \left( \p_3^2-q\p_3,\p_1\p_2-2\p_3 ,\p_2^2-q\p_1\right)
\]
The map 
\[
	\p_1 \mapsto \sigma_1,\quad \p_2 \mapsto \sigma_2,\quad \p_3 \mapsto \frac{1}{2}\sigma_1^3
\]
defines an isomorphism between $\qH^*(\Q_3,\C)$ and the Jacobi ring of $(\Xcan,\Wcan)$ as in Theorem~\ref{t:Jacobi}.

\vskip .2cm
\paragraph{\bf The quantum differential equations.}
Recall from Section~\ref{s:consequences} that the dual $A$-model connection ${}^A \nabla^\vee$ defines a system of differential equations called the quantum differential equations,
\[
	{}^A \nabla^\vee_{q \partial q} S = 0.
\]
In the case of $\Q_3$ our mirror result, Theorem~\ref{t:connections}, tells us that the map $(\HH_A,{}^A \nabla) \to (\HH_B,{}^B \nabla)$ given by
\[
	\sigma_i \mapsto [\p_i \omegacan],
\]
where $\omegacan=\frac{d\p_1}{\p_1} \wedge \frac{d\delta_1}{\delta_1} \wedge \frac{d\p_3}{p_3}$,
is an isomorphism. If $\Gamma$ is a real $3$-dimensional cycle in $\C\P^3 \setminus D$ this implies in particular that the function
\begin{multline*}
	S_\Gamma(q) := \left( \int_\Gamma e^{\Wcanq} \p_3\ \omegacan \right) \sigma_0 +
	\left( \int_\Gamma e^{\Wcanq} \p_2\ \omegacan \right) \sigma_1 + 
	\left( \int_\Gamma e^{\Wcanq} \p_1\  \omegacan \right) \sigma_2\\ + \left( \int_\Gamma e^{\Wcanq}\  \omegacan \right) \sigma_3
\end{multline*}
satisfies ${}^A \nabla^\vee_{q \partial q} S_\Gamma(q)=0$.


\end{document}